\documentclass[11pt]{amsart}

\usepackage[T1]{fontenc}
\usepackage[utf8]{inputenc}
\usepackage{amssymb}
\usepackage[hidelinks]{hyperref}
\usepackage[nameinlink,capitalize,noabbrev]{cleveref}

\hypersetup{
  pdftitle={The discrete second moment of the Riemann zeta-function along powers of Gram points},
  pdfauthor={Marius Dundulis; Ramūnas Garunkštis; Antanas Laurinčikas; Raivydas Šimėnas}
}

\numberwithin{equation}{section}
\newtheorem{theorem}{Theorem}[section]
\newtheorem{proposition}[theorem]{Proposition}
\newtheorem{lemma}[theorem]{Lemma}
\newtheorem{corollary}[theorem]{Corollary}
\theoremstyle{remark}
\newtheorem{remark}[theorem]{Remark}
\newtheorem*{introremark}{Remark 1.1}

\newcommand{\e}{\mathrm e}
\newcommand{\C}{\mathbb C}
\newcommand{\Sy}{\mathcal S}
\newcommand{\RePart}{\operatorname{Re}}
\newcommand{\ImPart}{\operatorname{Im}}

\begin{document}

\title[The second moment along powers of Gram points]
{The discrete second moment of the Riemann zeta-function\\
 along powers of Gram points}

\author[M. Dundulis]{Marius Dundulis}
\address[Marius Dundulis]{Euromonitor International, Jogailos g. 4,
LT-01116 Vilnius, Lithuania}
\email{marius.dundulis@euromonitor.com}

\author[R. Garunkštis]{Ramūnas Garunkštis}
\address[Ramūnas Garunkštis]{Institute of Mathematics, Faculty of Mathematics
and Informatics, Vilnius University, Naugarduko 24, 03225 Vilnius, Lithuania}
\email{ramunas.garunkstis@mif.vu.lt}
\urladdr{https://mif.vu.lt/~garunkstis/}

\author[A. Laurinčikas]{Antanas Laurinčikas}
\address[Antanas Laurinčikas]{Institute of Mathematics, Faculty of Mathematics
and Informatics, Vilnius University, Naugarduko 24, 03225 Vilnius, Lithuania}
\email{antanas.laurincikas@mif.vu.lt}

\author[R. Šimėnas]{Raivydas Šimėnas}
\address[Raivydas Šimėnas]{Institute of Mathematics, Faculty of Mathematics
and Informatics, Vilnius University, Naugarduko 24, 03225 Vilnius, Lithuania}
\email{raivydas.simenas@mif.vu.lt}
\urladdr{https://web.vu.lt/mif/r.simenas/}

\date{September 13, 2026}

\subjclass[2020]{Primary 11M06; Secondary 11L07}
\keywords{Riemann zeta-function, Gram points, discrete second moment,
Euler products, exponential sums, discrete universality, Lindelöf hypothesis}

\begin{abstract}
Let $t_k$ be the Gram points, and let $d\geq1$ be a fixed integer.  Put
\[
 \eta_d=\frac{1}{2^{d+1}-2},\qquad
 \sigma_d=1-\frac{\eta_d}{d},\qquad
 \sigma_d^*=\begin{cases}
  7/8,&d=2,\\
  \sigma_d,&d\ne2.
 \end{cases}
\]
We prove that, uniformly on compact subsets of
$\sigma_d^*<\RePart s<1$,
\[
 \frac1N\sum_{k=2}^{N}|\zeta(s+it_k^d)|^2
 \longrightarrow \zeta(2\RePart s).
\]
The underlying result is a mean-square approximation by finite Euler
products.  For the squares of Gram points, grouping equal frequency
ratios and applying the double large sieve improves the general
derivative-test boundary $11/12$ to $7/8$.  Assuming the Lindelöf
hypothesis, both
the Euler-product approximation and the second-moment formula hold on
every compact subset of $1/2<\RePart s<1$, for every fixed $d\geq1$.
As an application, we obtain a discrete universality theorem along the
same Gram-power shifts, in the same unconditional and conditional
regions.
\end{abstract}

\maketitle

\section{Introduction and main results}

The Riemann--Siegel theta-function is
\[
 \vartheta(t)=\ImPart\log\Gamma\left(\frac14+\frac{it}{2}\right)
              -\frac t2\log\pi,
\]
where the branch of the logarithm is chosen continuously.  For all
sufficiently large $t$, the function $\vartheta$ is strictly increasing.
We use the continuously interpolated Gram points $t_u$, defined by
\begin{equation}\label{eq:gram-definition}
 \vartheta(t_u)=\pi(u-1).
\end{equation}
At positive integral values $u=k$, these are the Gram points, with an
inessential index shift relative to another common convention.  For a
fixed integer $d\geq1$, we consider the shifts $t_k^d$.

Discrete questions involving Gram points arise naturally in the value
distribution of the zeta-function; see, for example,
\cite{Korolev2016,KorolevLaurincikas2019,Laurincikas2023}.
Related uniform-distribution results for the ordinates of $a$-points of
functional-equation factors, encompassing ordinary Gram points, were
obtained by Sourmelidis, Steuding, and Suriajaya
\cite[Corollary 1]{SourmelidisSteudingSuriajaya2022Indag}.
For $d\geq2$, the shifts $t_k^d$ are superlinear, and a continuous
mean-square theorem
cannot simply be sampled at the integers: differentiation of a
composition with the map $u\mapsto t_u^d$ introduces an unbounded
derivative.
We therefore estimate the discrete exponential sums directly.  This is
also the central theme in work on discrete moments along polynomial
shifts; see Sourmelidis \cite{Sourmelidis2020}.

The higher derivatives of $t_u$ and the qualitative uniform distribution
of $t_k^d$ modulo one were established by Dundulis, Garunk\v{s}tis,
Laurin\v{c}ikas, and \v{S}im\.{e}nas in the companion preprint
\cite{DundulisEtAl2026GramDerivatives}.  That preprint treats the
mean-square Euler-product approximation as a conjectural input.  In the
present paper we use its Gram-power derivative asymptotic to obtain a
quantitative exponential-sum estimate, from which we prove the
Euler-product approximation, the second moment, and universality in the
regions stated below.  

For $Y\geq2$, define the finite Euler product
\begin{equation}\label{eq:euler-product}
 \zeta_Y(s):=\prod_{p\leq Y}(1-p^{-s})^{-1}.
\end{equation}
Let $\Sy_Y$ be the set of positive integers all of whose prime factors
are at most $Y$.  Since only finitely many primes occur, one has the
absolutely convergent expansion
\begin{equation}\label{eq:smooth-series}
 \zeta_Y(s)=\sum_{m\in\Sy_Y}m^{-s},\qquad \RePart s>0.
\end{equation}

For a fixed integer $d\geq1$, set
\begin{equation}\label{eq:main-parameters}
 \eta_d:=\frac{1}{2^{d+1}-2},\qquad
 \sigma_d:=1-\frac{\eta_d}{d},\qquad
 \sigma_d^*:=\begin{cases}
  7/8,&d=2,\\
  \sigma_d,&d\ne2.
 \end{cases}
\end{equation}
The value $\sigma_d$ is supplied by the general derivative-test
argument.  Our baseline mean-square approximation is as follows.

\begin{theorem}[Unconditional Euler-product approximation]
\label{thm:unconditional-approximation}
Let $d\geq1$ be fixed and let $K$ be a compact set with
\[
 K\subset\{s\in\C:\sigma_d<\RePart s<1\}.
\]
Then
\begin{equation}\label{eq:main-approximation}
 \lim_{Y\to\infty}\limsup_{N\to\infty}\sup_{s\in K}
 \frac1N\sum_{k=2}^{N}
 \left|\zeta(s+it_k^d)-\zeta_Y(s+it_k^d)\right|^2=0.
\end{equation}
\end{theorem}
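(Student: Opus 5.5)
We follow the standard discrete mean-square scheme: approximate $\zeta$ by a short Dirichlet polynomial, compare it with a smoothed version of $\zeta_Y$, and reduce the discrete mean square to exponential sums over the Gram powers. The only genuinely new input is an exponential-sum bound for $t_k^d$, which determines $\sigma_d$.

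\emph{Step 1 (smoothing).} Fix $s=\sigma+it$ with $\sigma$ in a compact subinterval of $(\sigma_d,1)$ and $|t|$ bounded. Write both $\zeta(s+i\tau)$ and $\zeta_Y(s+i\tau)$, with $\tau=t_k^d$, as Mellin integrals against a smooth weight $v(n/M)$, where $M=N^{A}$ for a suitable $A>0$. Shifting the contour to $\RePart=\sigma-\sigma_0$ shows
\[
\zeta(s+i\tau)=\sum_n n^{-s-i\tau}v(n/M)+O\bigl(M^{\sigma_0-\sigma}\tau^{c}\bigr)+(\text{pole term}),
\]
and similarly for $\zeta_Y$ with the sum restricted to $m\in\Sy_Y$. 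The pole term at $w=1-s-i\tau$ is negligible by the decay of the Mellin transform $\hat v$, since $\tau\geq t_2^d$ is large. One needs $\tau^{c}M^{\sigma_0-\sigma}=o(1)$. Here I would use the convexity (or Weyl) bound $\zeta(\sigma'+i\tau)\ll\tau^{(1-\sigma')/2+\epsilon}$ on $\RePart w=\sigma_0-\sigma$; since $t_k\asymp k/\log k$, we have $\tau\le N^{d}$, so this constrains $A$ in terms of $d$. Taking $\sigma_0$ close to $1/2$ is not necessary. It suffices to shift only slightly, by $\delta$, below the line, paying $\tau^{\delta/2}$ against $M^{-\delta}$; this forces $A>d/2$.

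\emph{Step 2 (mean square of the difference).} Set $a_n=n^{-s}v(n/M)(1-\mathbf 1_{\Sy_Y}(n))$. It remains to bound $\frac1N\sum_{k\le N}\bigl|\sum_n a_n n^{-it_k^d}\bigr|^2$. Expanding the square gives a diagonal and an off-diagonal part:
\[
\sum_n |a_n|^2+\sum_{m\ne n}a_m\bar a_n\,\frac1N\sum_{k\le N}e^{-it_k^d\log(m/n)}.
\]
The diagonal is $\le\sum_{n>Y}n^{-2\sigma}\to0$ as $Y\to\infty$, uniformly on $K$, since $\sigma>1/2$. Everything therefore reduces to a bound, uniform in $m\neq n\le M^{1+\epsilon}$, for the sum
\[
S(N;\lambda)=\sum_{k\le N}e(\lambda t_k^d/2\pi),\qquad \lambda=\log(m/n).
\]

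\emph{Step 3 (main obstacle: the exponential sum).} Put $f(u)=\lambda t_u^d/2\pi$. By the derivative asymptotic from \cite{DundulisEtAl2026GramDerivatives}, $f^{(j)}(u)\asymp|\lambda|\,t_u^{d}u^{-j}$ up to logarithms, and $t_u^d\asymp (u/\log u)^d$. For $d=1$, Kusmin--Landau or the first derivative test combined with the truncation handles the sum. For general $d$, I would apply the van der Corput $q$-th derivative test with $q=d+1$ on dyadic blocks $u\sim U$. There $f^{(d+1)}\asymp\Lambda:=|\lambda|U^{-1}$ up to logs, and the test gives
\[
S\ll U\Lambda^{1/(2^{q}-2)}+U^{1-2^{2-q}}\Lambda^{-1/(2^{q}-2)}.
\]
With $2^{q}-2=2^{d+1}-2$, this is exactly where $\eta_d$ comes from. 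Using $|\lambda|\ge 1/M$ for the second term, and summing the off-diagonal with $|a_m a_n|\le (mn)^{-\sigma}$, the off-diagonal contribution is roughly
\[
M^{2-2\sigma}N^{-\eta_d}M^{\eta_d}+\cdots,
\]
with $M\approx N^{d/2+\epsilon}$ or some admissible exponent. I would optimise $M$ against the Step 1 constraint; requiring this to be $o(1)$ should yield precisely $\sigma>1-\eta_d/d$. Pairs with $|\lambda|$ very small, where $m,n$ are close, need the large-$\Lambda$ regime in which the first term is harmless, and pairs with $|\lambda|\gg1$ must be handled with the logarithmic factors in $t_u$. Checking that the exponents match $\sigma_d$, and that the bound holds uniformly in $s\in K$ so that the result follows by letting $N\to\infty$ and then $Y\to\infty$, is the delicate part. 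I expect the bookkeeping for the small-$\lambda$ range, near the diagonal, to be the main difficulty; I would treat it with Montgomery--Vaughan-type Hilbert inequality weights $1/|\log(m/n)|$.
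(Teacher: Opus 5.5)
\textbf{There is a genuine gap, and it sits in Step 1.}

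On the line $\RePart(s+w)=\sigma-\delta$ the convexity bound is $\zeta(\sigma-\delta+i\tau)\ll\tau^{(1-\sigma+\delta)/2+\varepsilon}$, not $\tau^{\delta/2+\varepsilon}$. You have dropped the factor $\tau^{(1-\sigma)/2}$, which is already present on the line $\RePart=\sigma$ itself, where $\zeta$ is unbounded. The shifted integral is therefore $\ll M^{-\delta}\tau^{(1-\sigma+\delta)/2+\varepsilon}$. Shifting ``only slightly'' is the \emph{worst} choice. The best admissible shift, to $\RePart(s+w)=0$, still forces $M\ge\tau^{1/(2\sigma)+\varepsilon}$, that is $A>d/(2\sigma)$, not $A>d/2$.

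This is the usual obstruction: a single polynomial of length $x$ misses the dual sum $\chi(z)\sum_{n\le\tau/(2\pi x)}n^{z-1}$. That sum is trivially small only once $x\ge\tau^{1/(2\sigma)+\varepsilon}$. Now feed $A>d/(2\sigma)$ into your off-diagonal term $X^{-\eta_d}M^{2-2\sigma+\varepsilon}$, which needs $2A(1-\sigma)<\eta_d$. You get only $\sigma>d/(d+\eta_d)$, and $d/(d+\eta_d)-\sigma_d=\eta_d^2/(d(d+\eta_d))>0$. For $d=2$ this is $12/13$ rather than $11/12$. So the argument as written does not cover the stated strip; your claim that it ``should yield precisely $\sigma>1-\eta_d/d$'' rests on the incorrect constraint $A>d/2$.

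\textbf{The missing idea: keep the dual sum.} The paper's proof uses the symmetric approximate functional equation, with both sums of length $M_k=(T_k/2\pi)^{1/2}\asymp X^{d/2}(\log X)^{-d/2}$. It then bounds $\chi(z_k)\sum_{m\le M_k}m^{z_k-1}$ in mean square rather than pointwise. Since $|\chi(z_k)|^2\asymp M^{2-4\sigma}$, its diagonal is $\ll M^{1-2\sigma}\to0$. After Abel summation against the slowly varying weight $|\chi(s+it_u^d)|^2$, its off-diagonal obeys the same bound $X^{-\eta_d}M^{2-2\sigma+\varepsilon}+\cdots$ as the primal sum, and this gives exactly $d(1-\sigma)<\eta_d$.

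Alternatively, a subconvexity input rescues your single-polynomial route. Shift to $\RePart(s+w)=1/2$ and use $\zeta(1/2+it)\ll t^{1/6+\varepsilon}$. This needs only $A>d/(6(\sigma-1/2))$, which is compatible with $2A(1-\sigma)<\eta_d$ on all of $(\sigma_d,1)$ when $d\ge2$. For $d=1$, your Kusmin--Landau remark works with the corrected constraint $1/(2\sigma)<A<1$. But neither of these is the computation you wrote.

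\textbf{Two smaller points.}
\begin{itemize}
\item Bounding $|\lambda|^{-\eta_d}\le M^{\eta_d}$ in the second derivative-test term costs a factor $M^{\eta_d}$. With $M\approx X^{d/2}$ this makes that term fail near $\sigma_d$ once $d\ge4$; your displayed $M^{2-2\sigma}N^{-\eta_d}M^{\eta_d}$ would not give $\sigma_d$ either. Use instead the elementary estimate $\sum_{m<n\le M}(mn)^{-\sigma}|\log(n/m)|^{-\eta}\ll M^{2-2\sigma+\varepsilon}$ for fixed $\eta<1$: split off $n\le 2m$ and write $n=m+h$. No Hilbert inequality is needed.
\item $M$ must be tied to the dyadic block of $k$, not set to $N^A$ for all $k\le N$. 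The pole term $\hat v(1-z)M^{1-z}$ is negligible only when $\tau\ge M^{\varepsilon}$, and $t_2^d$ is a fixed constant, not large.
\end{itemize}
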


The quadratic case admits a stronger range because equal frequency
ratios can be grouped before the off-diagonal is estimated.

\begin{theorem}[Quadratic improvement]
\label{thm:quadratic-improvement}
Let $K$ be a compact set with
\[
 K\subset\{s\in\C:\tfrac78<\RePart s<1\}.
\]
Then \eqref{eq:main-approximation} holds with $d=2$.
\end{theorem}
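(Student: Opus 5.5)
We follow the scheme that presumably proves \cref{thm:unconditional-approximation} and change only the off-diagonal estimate. Fix $\sigma_0$ with $7/8<\sigma_0<\min_{s\in K}\RePart s$. For $s\in K$ and $\tau=t_k^2$, approximate $\zeta(s+i\tau)$ by a smoothed Dirichlet polynomial, using a Mellin integral with a Gamma kernel:
\[
 \zeta(s+i\tau)-\zeta_Y(s+i\tau)=\sum_{m}\frac{a_Y(m)}{m^{s+i\tau}}\,\e^{-m/X}+(\text{error}),
\]
with $a_Y(m)=1-\mathbf 1_{\Sy_Y}(m)$ and $X=\tau^{1+\epsilon}$ or a fixed power $N^{A}$. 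Shift the contour to $\RePart=\sigma_0-\delta$. This controls the pole and the error terms, provided we have a continuous-type mean-square bound for $\zeta$ on vertical lines sampled at $t_k^2$. We obtain that bound from the same exponential-sum machinery, or bypass it with Gallagher's lemma in the $u$-variable, as was presumably done before. After this step the problem is to show that
\[
 \frac1N\sum_{k\le N}\Bigl|\sum_{M<m\le 2M} b_m m^{-i t_k^2}\Bigr|^2 \ll_\epsilon \bigl(1+o(1)\bigr)\sum |b_m|^2 + N^{-1}M^{\theta}\Bigl(\sum|b_m|\Bigr)^2
\]
in dyadic ranges, with $b_m=a_Y(m)m^{-s}\e^{-m/X}$. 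The diagonal contributes $\sum_{m>Y}m^{-2\sigma}\to0$ as $Y\to\infty$. The threshold $7/8$ comes from the exponent $\theta$ that the off-diagonal allows.

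The key step is the off-diagonal. Expanding the square gives $\sum_{m\ne n}b_m\bar b_n\sum_k \e(-t_k^2\log(m/n)/2\pi)$. Instead of applying the second-derivative test pair by pair (which is what yields $11/12$ through $\eta_2/2$), we group pairs $(m,n)$ by the ratio $r=m/n$ in lowest terms, $r=a/b$, and write $m=ga$, $n=gb$. For fixed $(a,b)$ the inner $k$-sum depends only on $\lambda=\log(a/b)$, so the grouped coefficient is $c_{a,b}=\sum_g b_{ga}\bar b_{gb}$, with $|c_{a,b}|\le(\sum_g|b_{ga}|^2)^{1/2}(\sum_g |b_{gb}|^2)^{1/2}$. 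We then view the off-diagonal as a bilinear form $\sum_k\sum_{\lambda}c_\lambda \e(\lambda\, t_k^2/2\pi)$ and apply the double large sieve (Bombieri--Iwaniec) with points $x_k=t_k^2$ and frequencies $y_\lambda=\lambda/2\pi$. Using the Gram derivative asymptotics from \cite{DundulisEtAl2026GramDerivatives}, $t_u\sim 2\pi u/\log u$ and $t_u^2$ has spacing $\asymp t_u/\log u$, so the points $x_k$ are well spaced at scale $\asymp N/\log N$ up to log factors. The $\lambda$ lie in $[-\log 4,\log4]$ and are spaced at least $\gg 1/(ab)$. Counting the frequencies in short intervals then bounds the large-sieve factors. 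One factor is $\sum_k$-counting of pairs with $|x_k-x_{k'}|\le 1/\Delta_\lambda$, about $N(1+1/(\Delta N))$. The other is $\sum_{\lambda,\lambda'}|c_\lambda c_{\lambda'}|$ over $|\lambda-\lambda'|\le 1/X_{\max}$, where $X_{\max}\asymp N^2$.

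We then optimise over the dyadic size $M$ and the truncation of $a,b$. Pairs with large $ab$ have few $g$, so $c_{a,b}$ is small, and their frequencies are dense but weighted lightly. The bound we aim for is an off-diagonal contribution $\ll N^{1-\delta}$ whenever $\sum_m m^{-2\sigma_0}$-type weights satisfy $M^{2-2\sigma_0}\ll N^{1/4-\epsilon}$. Together with $M\le X\approx N^{2+\epsilon}$ this yields exactly $\sigma_0>7/8$. This balancing is the main obstacle. The large ranges $M\gg N$ must be handled first by the functional equation or an approximate functional equation (reflecting to length $\tau/M$), and only the middle ranges need the large sieve. Checking that the spacing of $t_k^2$ really is regular enough for the double large sieve (rather than only on average) is where we rely on the quantitative derivative asymptotic of $t_u$. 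Finally, uniformity in $s\in K$ follows because all bounds depend only on $\sigma_0$ and $|\ImPart s|\le C_K$, and letting $N\to\infty$ and then $Y\to\infty$ gives \eqref{eq:main-approximation} with $d=2$.
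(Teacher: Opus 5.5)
Your plan omits the step that actually produces $7/8$. Group by reduced ratios $a/b$ in a box $a\asymp A$, $b\asymp B$, $P=AB$, with $|c_{a,b}|\ll(ab)^{-\sigma_0}$. Over $U$ consecutive $k$, the phase points $t_k^2/2\pi$ have gaps $\asymp D=X/(\log X)^2$ and the frequencies $\log(a/b)$ have gaps $\gg1/P$. The double large sieve then gives, after normalization, $D^{1/2}P^{1/2-\sigma_0}+U^{-1/2}P^{1-\sigma_0}$. For bounded $P$ the first term is about $X^{1/2}$. The sieve only exploits the separation of many frequencies, so it gives nothing for a few frequencies carrying large coefficients. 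You therefore cannot use it \emph{instead of} the pairwise derivative test; you need both.

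The paper applies the third-derivative test (\cref{lem:higher-derivative-test} with $r=3$, which is where the asymptotic for $(t_u^2)'''$ enters; it is not a second-derivative test) to each grouped frequency. This gives $X^{-1/6+\varepsilon}P^{1-\sigma_0}$ per box, used for $P\le X^{4/3}$, with the sieve used for $P>X^{4/3}$. At the crossover both bounds equal $X^{-1/6+\frac43(1-\sigma_0)}=X^{-\frac43(\sigma_0-\frac78)}$.

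Your condition $M^{2-2\sigma_0}\ll N^{1/4}$ is asserted, not derived. It also requires effective length $M\asymp N/\log N$; combined with $M\le N^{2+\epsilon}$, as you wrote, it would give $15/16$. The spacing of $t_k^2$ needed by the sieve is trivial from $t_u'\asymp1/\log u$, so that is not where the derivative asymptotic is needed.

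Second, the double large sieve needs coefficients of product form $\alpha_{a,b}\beta_k$. If you use an approximate functional equation for $M\gg N$, the grouped coefficient depends on $k$ through the cutoff $r\le M_k/a$, and the reflected sum carries $\chi(s+it_k^2)$. So a single sieve over all $k\le N$ is not available. The paper freezes the cutoff at $M_J$ on blocks of length $U=X^{3/4-\theta}$, with these constraints:
\begin{itemize}
\item $U\le X^{3/4}<X^{\sigma_0}$ keeps the cutoff-shell error negligible;
\item the sieve then costs an extra term $U^{-1/2}M^{2-2\sigma_0}$;
\item $U>X^{2/3}$ keeps the short-interval third-derivative bound at $X^{-1/6}$.
\end{itemize}

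Your smoothing step does not avoid this as written. With $X=\tau^{1+\epsilon}$ the weight depends on $k$ again. Shifting only to $\RePart=\sigma_0-\delta$ needs a discrete mean-square bound for $\zeta$ along $t_k^2$, which is essentially the statement being proved. Gallagher-type sampling in $u$ is exactly what fails for $d\ge2$.

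The smoothing idea can be rescued. On $N<k\le2N$, take the $k$-independent weight $\e^{-m/N^{3/2}}$, shift to the critical line, and use the convexity bound pointwise, which gives error $\ll N^{5/4-3\sigma_0/2+\varepsilon}$. You can then skip the reflection and run the same hybrid over the whole block, where the sieve's second term is at most $N^{-1/2+3(1-\sigma_0)+\varepsilon}$. That still requires the pointwise third-derivative complement for small $P$, which is the real gap in your plan.
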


The linear case has $\sigma_1=1/2$.  The next three left endpoints
supplied by the theorem are
\begin{equation}\label{eq:first-boundaries}
 \sigma_2=\frac{11}{12},\qquad
 \sigma_3=\frac{41}{42},\qquad
 \sigma_4=\frac{119}{120}.
\end{equation}
Here $11/12$ is the baseline value for $d=2$;
\cref{thm:quadratic-improvement} replaces it by $7/8$.  The method does
not include the boundary line $\RePart s=7/8$.

\begin{theorem}[Conditional Euler-product approximation]
\label{thm:conditional-approximation}
Assume the Lindelöf hypothesis.  Let $d\geq1$ be fixed and let $K$ be a
compact set with
\[
 K\subset\{s\in\C:\tfrac12<\RePart s<1\}.
\]
Then \eqref{eq:main-approximation} holds.
\end{theorem}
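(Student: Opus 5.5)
Under the Lindelöf hypothesis the plan is to keep the same skeleton as in the proof of \cref{thm:unconditional-approximation}. The one change is to replace the long Dirichlet polynomial by a short one, of length $T^{\varepsilon}$ with $T\asymp t_N^d$. Shortening is exactly what lets the derivative-test exponential-sum bounds work for every $\sigma>1/2$.

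Fix $s=\sigma+i\tau\in K$, so $\sigma\ge\sigma_0>1/2$, and write $T_k=t_k^d+\tau$. The first step is to approximate $\zeta(s+it_k^d)$ by the smoothed sum $\sum_n n^{-s-it_k^d}e^{-n/X}$ with $X=T^{\varepsilon}$. By Mellin inversion,
\[
 \sum_n n^{-w}e^{-n/X}=\frac1{2\pi i}\int_{(c)}\zeta(w+z)\Gamma(z)X^z\,dz .
\]
Shift the contour to $\RePart z=1/2-\sigma+\delta<0$. The pole at $z=0$ gives $\zeta(w)$. The pole at $z=1-w$ contributes $\ll X^{1-\sigma}e^{-c|t_k|^d}$, which is negligible. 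On the new line, Lindelöf gives $\zeta(w+z)\ll (T+|z|)^{\delta}$, so the remaining integral is $\ll X^{1/2-\sigma+\delta}T^{\delta}$. With $\delta$ small relative to $\varepsilon(\sigma_0-1/2)$ this is $o(1)$ uniformly in $k\le N$ and $s\in K$.

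Similarly, $\zeta_Y(w)=\sum_{m\in\Sy_Y}m^{-w}$ is compared with the smoothed $\sum_{m\in\Sy_Y}m^{-w}e^{-m/X}$. Because $\Sy_Y$ is sparse, the difference is $O(\sum_{m\in\Sy_Y,\,m>X^{1/2}}m^{-\sigma})+O(X^{-1/2}\sum_{m\le X^{1/2}}m^{1-\sigma})$. For fixed $Y$, with $\sigma>0$ and $X\to\infty$, this tends to $0$.

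The problem is now the mean square over $k$ of $D(w)=\sum_n a_n n^{-w}$, where $a_n=e^{-n/X}(1-\mathbf 1_{\Sy_Y}(n))$. Expanding the square gives a diagonal part and an off-diagonal part:
\[
 \frac1N\sum_{k=2}^N|D(s+it_k^d)|^2=\sum_n\frac{|a_n|^2}{n^{2\sigma}}+\sum_{m\ne n}\frac{a_m\bar a_n}{(mn)^{\sigma}}\left(\frac nm\right)^{i\tau}\frac1N\sum_{k}e^{-it_k^d\log(m/n)}.
\]
The diagonal is at most $\sum_{n>Y}n^{-2\sigma}\to0$ as $Y\to\infty$, because $2\sigma>1$.

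For the off-diagonal, we use the quantitative exponential-sum estimate already established for the unconditional theorem. That estimate comes from the Gram-power derivative asymptotic $\frac{d^j}{du^j}t_u^d\asymp t_u^d/u^j$ and a van der Corput $(d+1)$-th derivative test. It should give
\[
 \frac1N\sum_{k\le N}e^{-it_k^d\log(m/n)}\ll |\log(m/n)|^{\kappa}N^{-\eta_d}+\frac{1}{N|\log(m/n)|}\cdot T^{o(1)}
\]
for some fixed $\kappa$. Since $m,n\le X^{1+o(1)}=T^{O(\varepsilon)}$ up to an exponentially small tail, we have $|\log(m/n)|\gg X^{-1}$ and $|\log(m/n)|\ll\log X$. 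The off-diagonal is then
\[
 \ll\Bigl(\sum_{n\le X^{2}}n^{-\sigma}\Bigr)^2\bigl(N^{-\eta_d}T^{o(1)}+XN^{-1}\bigr)\ll X^{2(1-\sigma)+2}N^{-\eta_d+o(1)}.
\]
Since $T\asymp N^d(\log N)^d$, choosing $\varepsilon$ small enough in terms of $d$ makes this $o(1)$ uniformly on $K$. The sup over $s\in K$ is harmless because every bound depends on $s$ only through $\sigma_0$ and $|\tau|\le C$.

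Combining the three approximations with $|a+b+c|^2\le3(|a|^2+|b|^2+|c|^2)$ gives
\[
 \limsup_{N\to\infty}\sup_{s\in K}\frac1N\sum_{k=2}^N|\zeta-\zeta_Y|^2\ll\sum_{n>Y}n^{-2\sigma_0},
\]
which tends to $0$ as $Y\to\infty$.

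The main obstacle is the uniformity of the exponential-sum bound in the small frequency $\lambda=\log(m/n)\asymp X^{-1}$. In this regime the phase $\lambda t_u^d$ has first derivative $\asymp\lambda t_u^d/u$, which may be less than $1$ for small $k$. My first attempt is to split $k$ dyadically. On ranges where the first derivative is small, I would use Kusmin–Landau together with monotonicity. Elsewhere I would use the higher-derivative test. Because $X$ is only a tiny power of $N$, every loss is polynomial in $X$ and is absorbed by the power saving $N^{-\eta_d}$; this is exactly where the Lindelöf hypothesis pays off.
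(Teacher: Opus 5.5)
Your argument is correct and follows the same overall strategy as the paper. Under Lindelöf you replace $\zeta$ by a Dirichlet polynomial of length $T^{\text{small}}$, bound the diagonal by $\sum_{n\notin\Sy_Y}n^{-2\sigma_0}$, and let the power saving $N^{-\eta_d}$ from \cref{prop:gram-exponential-sum} absorb every polynomial loss in the length. The implementation differs, though:
\begin{itemize}
\item The paper quotes Gonek's sharp-cutoff approximation (\cref{lem:lindelof-polynomial}) with a $k$-dependent length $T_k^\mu$ on each dyadic block. This forces the terminal-subinterval argument, and it uses \cref{lem:pair-sums} to obtain the explicit admissibility condition $2d\mu(1-\sigma_0)<\eta_d$.
\item You derive a smoothed approximation yourself by Mellin inversion and take one length $X=T_N^{\varepsilon}$ for all $k\le N$. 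The coefficients $a_n$ then do not depend on $k$, the square expands exactly, and crude absolute-value bounds suffice.
\item The price of your route is the extra comparison of $\zeta_Y$ with its smoothed version, together with the residue term $\Gamma(1-w)X^{1-w}$. That term is negligible only after averaging over $k$: pointwise it is of size $X^{1-\sigma}$ when $T_k$ is bounded. It is harmless because $\sum_k e^{-\pi|T_k|}<\infty$, which gives a mean-square contribution $O(X^{2}/N)$.
\end{itemize}

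The ``main obstacle'' you identify is not actually one. The $(d+1)$-st derivative test never looks at the size of $F'$, so \cref{prop:gram-exponential-sum} is already uniform in all real $\lambda\ne0$. Summed over dyadic blocks it gives $N^{-\eta_d}|\lambda|^{\eta_d}+N^{-(2^{1-d}-\eta_d)}|\lambda|^{-\eta_d}(\log N)^{O(1)}+N^{-1}\log N$. With $X^{-2}\ll|\lambda|\ll\log X$ and \eqref{eq:savings-comparison}, this is $X^{O(1)}N^{-\eta_d+o(1)}$, so no Kusmin--Landau step is needed. The bound you wrote, with a $1/(N|\lambda|)$ term, is not the shape the proposition gives, but any bound of the form $X^{O(1)}N^{-\eta_d+o(1)}$ suffices.

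Minor slips to fix:
\begin{itemize}
\item $T\asymp N^d(\log N)^{-d}$, not $N^d(\log N)^{d}$.
\item In the $\zeta_Y$ comparison, the factor from $1-e^{-m/X}\le m/X$ is $X^{-1}$, and the sum must be restricted to $m\in\Sy_Y$. As written over all $m$ with $X^{-1/2}$, it does not tend to zero. Alternatively, use dominated convergence, since $\sum_{m\in\Sy_Y}m^{-\sigma_0}<\infty$.
\end{itemize}
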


The approximation theorems imply the desired second-moment formula.

\begin{corollary}[Discrete second moment]
\label{cor:second-moment}
Let $d\geq1$ be fixed.  If
$K\subset\{s\in\C:\sigma_d^*<\RePart s<1\}$ is compact, then
\begin{equation}\label{eq:moment-limit}
 \lim_{N\to\infty}\sup_{s\in K}
 \left|\frac1N\sum_{k=2}^{N}|\zeta(s+it_k^d)|^2
       -\zeta(2\RePart s)\right|=0.
\end{equation}
Assuming the Lindelöf hypothesis, the same conclusion holds for every
compact set
$K\subset\{s\in\C:1/2<\RePart s<1\}$.
\end{corollary}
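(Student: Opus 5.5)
We deduce \eqref{eq:moment-limit} from the approximation theorems (\cref{thm:unconditional-approximation} for $d\neq2$, \cref{thm:quadratic-improvement} for $d=2$, and \cref{thm:conditional-approximation} under Lindelöf). The plan is to show that the discrete second moment of the finite Euler product tends to $\zeta_Y(2\sigma)$, where $\sigma=\RePart s$, and then to transfer this to $\zeta$ by Cauchy--Schwarz.

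\textbf{Step 1 (the Euler product moment).} Fix $Y$. Then $\zeta_Y(s)=\sum_{m\in\Sy_Y}m^{-s}$ converges absolutely for $\sigma>0$, so squaring gives
\[
 \frac1N\sum_{k=2}^N|\zeta_Y(s+it_k^d)|^2=\sum_{m,n\in\Sy_Y}\frac{1}{m^{s}n^{\bar s}}\cdot\frac1N\sum_{k=2}^N\left(\frac nm\right)^{it_k^d}.
\]
The diagonal $m=n$ contributes exactly $\sum_{m\in\Sy_Y}m^{-2\sigma}=\zeta_Y(2\sigma)$. For $m\neq n$, the inner average equals $\frac1N\sum_k\e^{2\pi i\,t_k^d\log(n/m)/(2\pi)}$. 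This tends to $0$ because $(t_k^d\alpha)_k$ is uniformly distributed modulo one for every real $\alpha\neq0$; this is the qualitative result of \cite{DundulisEtAl2026GramDerivatives}, and it also follows from the quantitative exponential-sum estimate underlying \cref{thm:unconditional-approximation}.

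To exchange the limit with the double sum, truncate to $m,n\le M$. The tail is bounded by $2\zeta_Y(\sigma)\sum_{m>M,\,m\in\Sy_Y}m^{-\sigma}$, uniformly in $N$. Since $\sigma\ge\sigma_0>0$ on $K$, this tail is small uniformly on $K$. For the finitely many remaining pairs, the inner averages do not depend on $s$, so their convergence to $0$ is uniform on $K$. Hence
\[
 \frac1N\sum_{k=2}^N|\zeta_Y(s+it_k^d)|^2\to\zeta_Y(2\sigma)
\]
uniformly on $K$ as $N\to\infty$.

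\textbf{Step 2 (transfer to $\zeta$).} Write $A_N(f)=\bigl(\frac1N\sum_k|f(s+it_k^d)|^2\bigr)^{1/2}$. By Minkowski's inequality,
\[
 \bigl|A_N(\zeta)-A_N(\zeta_Y)\bigr|\le A_N(\zeta-\zeta_Y).
\]
By the relevant approximation theorem, $\limsup_N\sup_K A_N(\zeta-\zeta_Y)=:\varepsilon_Y\to0$ as $Y\to\infty$. Combining this with Step 1 gives
\[
 \limsup_{N}\sup_K\bigl|A_N(\zeta)-\zeta_Y(2\sigma)^{1/2}\bigr|\le\varepsilon_Y.
\]
As $Y\to\infty$, $\zeta_Y(2\sigma)\to\zeta(2\sigma)$ uniformly on $K$, since $2\sigma\ge2\sigma_0>1$ there. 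Also $\zeta(2\sigma)$ is bounded on $K$, so we may square both sides. Letting $Y\to\infty$ then yields \eqref{eq:moment-limit}.

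\textbf{Main obstacle.} The only non-formal input is the vanishing of the off-diagonal Weyl averages for fixed $m\neq n$, i.e.\ the equidistribution of $\alpha t_k^d$. This is already available from \cite{DundulisEtAl2026GramDerivatives}, or from the derivative-test exponential-sum bound used in the proofs of the approximation theorems. Everything else is bookkeeping of uniformity on $K$. The conditional statement is proved in exactly the same way, with \cref{thm:conditional-approximation} in place of the unconditional ones.
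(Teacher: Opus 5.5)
Your proposal is correct and follows the paper's proof essentially step for step. You expand $|\zeta_Y|^2$ as an absolutely convergent double series and truncate it uniformly on $K$. You kill the finitely many fixed nonzero frequencies $\log(n/m)$ with the Gram-power exponential-sum bound (\cref{prop:gram-exponential-sum}) on dyadic blocks; your diagonal $\zeta_Y(2\sigma)$ is exactly the paper's $C_Y(\sigma)$. You then transfer to $\zeta$ by Minkowski's inequality and the approximation theorems, and let $Y\to\infty$ before squaring. The only difference is that you also cite the companion preprint's equidistribution as an alternative source for the vanishing Weyl averages, whereas the paper relies solely on the proposition.
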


\begin{theorem}[Discrete universality along Gram powers]
\label{thm:universality}
Let $d\geq1$ be fixed, let
$K\subset\{s\in\C:\sigma_d^*<\RePart s<1\}$ be a compact set with
connected complement,
and let $f$ be continuous and nonvanishing on $K$ and analytic in its
interior.  Then, for every $\varepsilon>0$,
\begin{equation}\label{eq:universality}
 \liminf_{N\to\infty}\frac1N\#\left\{2\leq k\leq N:
 \max_{s\in K}|\zeta(s+it_k^d)-f(s)|<\varepsilon\right\}>0.
\end{equation}
Assuming the Lindelöf hypothesis, the same statement holds for every
compact set
$K\subset\{s\in\C:1/2<\RePart s<1\}$ with connected complement.
\end{theorem}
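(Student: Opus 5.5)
We derive \cref{thm:universality} from the approximation theorems (\cref{thm:unconditional-approximation}, \cref{thm:quadratic-improvement}, and, under the Lindelöf hypothesis, \cref{thm:conditional-approximation}) together with a discrete limit theorem for the finite Euler products $\zeta_Y(s+it_k^d)$. The plan has three steps: a denseness step for Euler products, an equidistribution step for the vectors $(t_k^d\log p)_{p\le Y}$ modulo $2\pi$, and a transfer step passing from $\zeta_Y$ to $\zeta$ in mean square.

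\emph{Step 1 (approximation by a finite Euler product with a twist).} Choose a slightly larger compact $K'\supset K$ inside the strip, still with connected complement, so that $K$ lies in the interior of $K'$. By Mergelyan's theorem, pick a polynomial $P$ with no zeros on $K'$ such that $|f-\e^{P}|<\varepsilon/4$ on $K$. By the classical Bagchi--Voronin denseness lemma, the series $\sum_p \log(1-a_p p^{-s})^{-1}$ with $|a_p|=1$ are dense in $H(K'^{\circ})$. Hence there exist $Y_0$ and unimodular $(a_p)_{p\le Y_0}$ with
\[
 \max_{s\in K}\Bigl|\prod_{p\le Y}(1-a_pp^{-s})^{-1}-\e^{P(s)}\Bigr|<\varepsilon/4
\]
for all $Y\ge Y_0$, where $a_p=1$ is set for $Y_0<p\le Y$. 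Obtaining a tail that is uniform in $Y$ needs the standard argument. It is cleaner to fix one $Y$ large, chosen later, and to use the full denseness statement for that $Y$.

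\emph{Step 2 (discrete equidistribution).} We need that, for each fixed $Y$, the vectors $(t_k^d\log p/2\pi)_{p\le Y}$ are uniformly distributed modulo $1$. By Weyl's criterion this reduces to $\frac1N\sum_{k\le N}\e^{it_k^d\log r}\to0$ for every rational $r\ne1$ built from primes $\le Y$. This is precisely the off-diagonal exponential-sum estimate, from the Gram-power derivative asymptotics of \cite{DundulisEtAl2026GramDerivatives}, that underlies \cref{thm:unconditional-approximation}. We reuse it with the frequency $\log r$ fixed. Consequently the set of $k$ with $|p^{-it_k^d}-a_p|<\delta$ for all $p\le Y$ has positive lower density, namely about the product of the arc measures. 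For such $k$, continuity gives $\max_K|\zeta_Y(s+it_k^d)-\prod_{p\le Y}(1-a_pp^{-s})^{-1}|<\varepsilon/4$ once $\delta$ is small in terms of $Y$.

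\emph{Step 3 (transfer).} This is the main obstacle, because the density from Step 2 shrinks as $Y$ grows while the approximation error must be uniform. Cover $K$ by a slightly larger compact $K''$ and use Cauchy's integral formula: $\max_K|g|\ll\int_{K''}|g|$ with $g=\zeta-\zeta_Y$. By \eqref{eq:main-approximation} and Chebyshev's inequality, the density of $k$ with $\max_K|\zeta(s+it_k^d)-\zeta_Y(s+it_k^d)|\ge\varepsilon/4$ is at most $C\varepsilon^{-2}\limsup\sup_{K''}\frac1N\sum|\zeta-\zeta_Y|^2$. This bound tends to $0$ as $Y\to\infty$, but it has to beat the Step 2 density $\delta(Y)^{\pi(Y)}$, which is not automatic. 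To handle this, replace the hard neighbourhood count by the standard probabilistic route. Step 2 shows that $\zeta_Y(s+it_k^d)$ converges in distribution, as $N\to\infty$, to $\zeta_Y(s,\omega)=\prod_{p\le Y}(1-\omega(p)p^{-s})^{-1}$ on the torus. Letting $Y\to\infty$ and using \eqref{eq:main-approximation} in the usual Billingsley approximation theorem, $\zeta(s+it_k^d)$ converges weakly in $H(D)$ to the random Euler product $\zeta(s,\omega)$, with $D$ the relevant open strip. The support of $\zeta(s,\omega)$ is the closure of the nonvanishing analytic functions, by Step 1. The portmanteau theorem applied to the open set $\{g:\max_K|g-f|<\varepsilon\}$ then yields \eqref{eq:universality}. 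The same argument with \cref{thm:conditional-approximation} gives the conditional statement.
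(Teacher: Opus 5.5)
Your final argument, the probabilistic route in Step~3, is correct, but it is not the paper's proof.

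\textbf{Your route.} You run the classical Bagchi scheme:
\begin{itemize}
\item equidistribution of $(t_k^d\log p/2\pi)_{p\le Y}$ gives weak convergence of $\zeta_Y(s+it_k^d)$ in $H(D)$;
\item Billingsley's approximation theorem, fed by the mean-square theorems (upgraded to $\max_K$ via Cauchy's formula and Chebyshev) and by the a.s.\ convergence of $\zeta_Y(s,\omega)$ to $\zeta(s,\omega)$, yields a discrete limit theorem for $\zeta(s+it_k^d)$;
\item the support theorem and the portmanteau inequality finish, applied to the open set $\{g:\max_K|g-f|<\varepsilon\}$, which contains $\e^{P}$ and hence meets the support.
\end{itemize}

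\textbf{The paper's route.} The paper never passes to a limit measure on $H(D)$. It handles exactly the obstacle you identify, a neighbourhood count at level $Y$ whose density decays with $Y$, in \cref{lem:voronin-finite-products}. Pa\'nkowski's lemma constrains only the phases of a finite prime set $\mathcal M\supset\{p\le z\}$, to a box of volume $a$. For every $Y\ge\max\mathcal M$, the phases of the remaining primes are averaged out by orthogonality, $\int|Z_{\mathcal R_Y}-1|^2\,d\boldsymbol\omega_{\mathcal R_Y}\le R(z)$. With the Bergman estimate and Chebyshev, this leaves a good set of volume $>a/2$ on the $\pi(Y)$-dimensional torus, uniformly in $Y$. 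Equidistribution turns it into density at least $c=a/2$. Only then is $Y$ fixed so that the exceptional set from the approximation theorems has density $<c/9$.

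\textbf{Comparison.} Your route gives more: a genuine limit theorem in $H(D)$, with universality as a corollary. But it rests on several facts you only cite:
\begin{itemize}
\item a.s.\ convergence of the random Euler product in $H(D)$;
\item Billingsley's theorem;
\item the support theorem, which needs, besides the denseness lemma of Step~1, the lemma on supports of a.s.\ convergent sums of independent $H(D)$-valued random elements, followed by Hurwitz's theorem.
\end{itemize}
The paper's argument is elementary: volumes on finite-dimensional tori plus Chebyshev. It uses a single external approximation lemma and the approximation theorems on one rectangle $G$, and it produces an explicit $Y$-independent density constant.

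\textbf{A false aside in Step~1.} With $a_p=1$ for $Y_0<p\le Y$, the approximation cannot be uniform in $Y$, so this claim should be deleted. For small $\varepsilon$ it would keep $\prod_{Y_0<p\le Y}(1-p^{-s})^{-1}$ close to $1$ for all $Y$. That would keep the partial sums of $\sum_p p^{-s}$ bounded at a point with $\RePart s<1$. Bounded partial sums of a Dirichlet series at $s_0$ force convergence for $\RePart s>\RePart s_0$, in particular convergence of $\sum_p p^{-1}$, which is false. The tail phases must either come from the full convergent sequence in the denseness lemma or, as in the paper, be averaged. Your final argument never uses this claim, so the proof stands.
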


  The rest of the paper gives proofs of these statements.

\begin{introremark}
We note an interesting coincidence: the thresholds $7/8$ and $11/12$
appearing in our quadratic case also occur in OpenAI's preprints
\href{https://github.com/openai/math/blob/main/preprints/The-Quasi-Riemann-Hypothesis-September-30-2026/paper.pdf}
{\emph{The Quasi-Riemann Hypothesis: A Zero-Free Half-Plane
$\operatorname{Re}s>7/8$}} and
\href{https://github.com/openai/math/blob/main/preprints/The-Quasi-Riemann-Hypothesis-October-5-2026/paper2.pdf}
{\emph{The Quasi-Riemann Hypothesis}}, respectively.  These manuscripts
are dated 30 September and 5 October 2026 and were publicly released on
6 October 2026.

The proofs of our Theorems~1.1 and~1.2 were developed with assistance from
ChatGPT in exchanges on 21 August 2026: the $11/12$ threshold arose in
the exchange recorded at 6:46~p.m., and the $7/8$ threshold in those
recorded at 8:20~p.m.\ and 9:47~p.m.  

When subsequently asked about the
coincidence, ChatGPT gave the following assessment: ``There is a genuine
similarity in the use of frequency separation and large-sieve estimates,
but the matching $7/8$ and $11/12$ thresholds arise from different
calculations.''
\end{introremark}

\section{Exponential sums along Gram powers}

We use only two classical size estimates and one higher-derivative
asymptotic.  The results of Lavrik \cite{Lavrik1994} and Korolev
\cite{Korolev2016} give, as $u\to\infty$,
\begin{equation}\label{eq:t-asymptotic}
 \begin{aligned}
 t_u&=\frac{2\pi u}{\log u}
 \left(1+O\left(\frac{\log\log u}{\log u}\right)\right),\\
 t_u'&=\frac{2\pi}{\log u}
 \left(1+O\left(\frac{\log\log u}{\log u}\right)\right).
 \end{aligned}
\end{equation}
In particular, $t_u'/t_u\ll1/u$.
For every fixed integer $d\geq1$, Dundulis et al.
\cite[proof of Theorem~4]{DundulisEtAl2026GramDerivatives} proved
\begin{equation}\label{eq:phase-d-plus-one}
 \frac{d^{d+1}}{du^{d+1}}\bigl(t_u^d\bigr)
 \sim-\frac{d\,d!(2\pi)^d}{u\log^{d+1}u}.
\end{equation}
In particular, throughout every sufficiently large dyadic interval
$[X,2X]$,
\begin{equation}\label{eq:phase-derivative-comparison}
 \left|\frac{d^{d+1}}{du^{d+1}}\bigl(t_u^d\bigr)\right|
 \asymp_d\frac1{X(\log X)^{d+1}}.
\end{equation}

We use $e(x)=\exp(2\pi ix)$.  The following is a standard consequence
of the classical higher-derivative estimate; see Graham and Kolesnik
\cite[Theorems 2.2 and 2.8]{GrahamKolesnik1991}.

\begin{lemma}[Higher-derivative test]
\label{lem:higher-derivative-test}
Let $r\geq2$, let $I$ be an interval of length at most $H$, and suppose
that $F\in C^r(I)$ satisfies
\[
 \Lambda\leq |F^{(r)}(u)|\leq C\Lambda\qquad(u\in I)
\]
for a fixed $C\geq1$.  If $\delta_r=(2^r-2)^{-1}$, then
\begin{equation}\label{eq:higher-derivative-test}
 \left|\sum_{k\in I\cap\mathbb Z}e(F(k))\right|
 \ll_{r,C}H\Lambda^{\delta_r}
 +H^{1-2^{2-r}}\Lambda^{-\delta_r}+1.
\end{equation}
\end{lemma}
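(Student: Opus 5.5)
We derive \cref{lem:higher-derivative-test} from the classical $r$-th derivative test by induction on $r$. The base case is $r=2$, where $\delta_2=1/2$ and $2^{2-r}=1$. Here we will use van der Corput's second-derivative estimate (Graham--Kolesnik, Theorem~2.2): if $\Lambda\le|F''|\le C\Lambda$ on $I$, then
\[
 \sum_{k\in I}e(F(k))\ll_C H\Lambda^{1/2}+\Lambda^{-1/2}.
\]
This is exactly \eqref{eq:higher-derivative-test} with $r=2$, since $H^{1-1}=1$. We will prove this via Kusmin--Landau together with a partition of $I$ according to the integer part of $F'$ (or we simply quote it).

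For the inductive step $r\ge3$, we apply the Weyl--van der Corput inequality (Graham--Kolesnik, Lemma~2.5). For any $1\le Q\le H$,
\[
 \Bigl|\sum_{k\in I}e(F(k))\Bigr|^2\ll \frac{H^2}{Q}+\frac{H}{Q}\sum_{1\le q<Q}\Bigl|\sum_{k\in I_q}e(F(k+q)-F(k))\Bigr|.
\]
The differenced phase $F_q(u)=F(u+q)-F(u)$ satisfies $F_q^{(r-1)}(u)=qF^{(r)}(\xi)$, and $F^{(r)}$ has constant sign on $I$ because $F^{(r)}$ is continuous and $|F^{(r)}|\ge\Lambda>0$. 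Hence
\[
 q\Lambda\le|F_q^{(r-1)}|\le Cq\Lambda .
\]
Applying the inductive hypothesis with $\Lambda$ replaced by $q\Lambda$ gives
\[
 \ll Hq^{\delta_{r-1}}\Lambda^{\delta_{r-1}}+H^{1-2^{3-r}}(q\Lambda)^{-\delta_{r-1}}+1 .
\]
Summing over $q<Q$, dividing by $Q$, and multiplying by $H$ yields
\[
 |S|^2\ll \frac{H^2}{Q}+H^2Q^{\delta_{r-1}}\Lambda^{\delta_{r-1}}+H^{2-2^{3-r}}(Q\Lambda)^{-\delta_{r-1}}+H .
\]
We then choose $Q$ to balance the first two terms, namely $Q=\Lambda^{-\delta_{r-1}/(1+\delta_{r-1})}$. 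Using the identity
\[
 \frac{\delta_{r-1}}{1+\delta_{r-1}}=\frac1{2^{r-1}-1}=2\delta_r,
\]
this gives the main term $H^2\Lambda^{2\delta_r}$, whose square root is $H\Lambda^{\delta_r}$. The third term becomes $H^{2-2^{3-r}}\Lambda^{-\delta_{r-1}}Q^{-\delta_{r-1}}$. The exponent arithmetic here is the only real bookkeeping: using $\delta_{r-1}(1-2\delta_r)=2\delta_r$, this term equals $H^{2-2^{3-r}}\Lambda^{-2\delta_r}$, whose square root is $H^{1-2^{2-r}}\Lambda^{-\delta_r}$. The final term $H$ has square root $H^{1/2}$.

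The main obstacle is that the choice of $Q$ must satisfy $1\le Q\le H$, which brings in edge cases.

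\begin{itemize}
\item If $Q>H$, i.e.\ $\Lambda$ is tiny, we take $Q=H$ instead. The bound is then dominated by the $\Lambda^{-\delta_r}$ term, or else we apply the trivial bound $H$, which is absorbed.
\item If $Q<1$, then $\Lambda\ge1$, and the trivial bound $H\le H\Lambda^{\delta_r}$ suffices.
\end{itemize}

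The stray term $H^{1/2}$ must also be absorbed. When $H\Lambda^{\delta_r}\ge H^{1/2}$ it is dominated by the main term. Otherwise $\Lambda<H^{-1/(2\delta_r)}$, and then
\[
 H^{1-2^{2-r}}\Lambda^{-\delta_r}\ge H^{3/2-2^{2-r}}\ge H^{1/2}\qquad(r\ge3),
\]
so it is absorbed by the second term. The additive $+1$ covers short intervals, where $H<1$. This completes the induction. Since the statement is labelled as standard, in the paper it suffices to cite Graham--Kolesnik Theorem~2.8, which is precisely this statement with $q=r$, adjusting notation so that $\Lambda$ plays the role of $\lambda$ and $H$ that of $N$.
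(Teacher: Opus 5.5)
Your argument is correct, but it does not follow the paper's route.

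\textbf{What the paper does.} It does not induct. For $r=2$ it quotes the second-derivative test. For $r\ge3$ it quotes Graham--Kolesnik's Theorem~2.8 in its native three-term form
\[
N\Lambda^{\delta_r}+N^{1-1/(2Q)}+N^{1-2/Q+1/Q^2}\Lambda^{-1/(2Q)},\qquad Q=2^{r-2}.
\]
It then removes the middle term as the geometric mean of $N\Lambda^{\delta_r}$ and $N^{1-1/Q}\Lambda^{-\delta_r}$. It replaces the last term by $N^{1-1/Q}\Lambda^{-\delta_r}$ when $\Lambda\ge N^{-4+2/Q}$, and uses the trivial bound otherwise.

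\textbf{What you do.} You rerun the Weyl--van der Corput induction yourself and observe that the two-term shape $H\Lambda^{\delta_r}+H^{1-2^{2-r}}\Lambda^{-\delta_r}+1$ reproduces itself. With $Q\asymp\Lambda^{-2\delta_r}$, the identities $\delta_{r-1}/(1+\delta_{r-1})=2\delta_r$ and $\delta_{r-1}(1-2\delta_r)=2\delta_r$ give exactly the new exponents. Your three regimes cover every case:
\begin{itemize}
\item $\Lambda\ge1$, where the trivial bound suffices;
\item $\Lambda<H^{-1/(2\delta_r)}$, where $H^{1-2^{2-r}}\Lambda^{-\delta_r}\ge H^{3/2-2^{2-r}}\ge H$ for $r\ge3$, so the trivial bound is absorbed;
\item the intermediate range, where $1\le Q\le H$ is legitimate and the stray $H$ is at most $H^2\Lambda^{2\delta_r}$.
\end{itemize}
Your route gives a self-contained proof that needs only the second-derivative test and the differencing inequality, and it skips the conversion step entirely. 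The paper's route is shorter, but only because it pays for the citation with that reconciliation.

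\textbf{Two small corrections.}
\begin{itemize}
\item Graham--Kolesnik's Lemma~2.5 takes $Q$ to be a positive integer. You should therefore use $Q=\lfloor\Lambda^{-2\delta_r}\rfloor$, which only affects constants.
\item Your closing remark is inaccurate. Theorem~2.8 there concerns the $(q+2)$-nd derivative, so the matching index is $q=r-2$, not $q=r$. Its bound also has a different shape from the stated one. Citing it therefore requires exactly the short reconciliation the paper carries out; it is not ``precisely this statement.''
\end{itemize}
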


\begin{proof}
Since $F^{(r)}$ never vanishes, it has a constant sign on $I$.  The
case $r=2$ is the classical second-derivative estimate
\[
 \left|\sum_{k\in I\cap\mathbb Z}e(F(k))\right|
 \ll_C |I|\Lambda^{1/2}+\Lambda^{-1/2}+1.
\]
Replacing $|I|$ by $H$ proves \eqref{eq:higher-derivative-test} in
this case.  
For $r\geq3$, put $N=|I|\geq1$ and $Q=2^{r-2}$, so that
$\delta_r=(4Q-2)^{-1}$.  By
\cite[Theorem 2.8]{GrahamKolesnik1991},
\[
 \left|\sum_{k\in I\cap\mathbb Z}e(F(k))\right|
 \ll_{r,C}N\Lambda^{\delta_r}+N^{1-1/(2Q)}
 +N^{1-2/Q+1/Q^2}\Lambda^{-1/(2Q)}.
\]
The middle term is the geometric mean of $N\Lambda^{\delta_r}$ and
$N^{1-1/Q}\Lambda^{-\delta_r}$.  If $\Lambda\geq N^{-4+2/Q}$, the
last term is at most $N^{1-1/Q}\Lambda^{-\delta_r}$; otherwise this
quantity is at least $N$, and the trivial bound applies.  Hence
\[
 \left|\sum_{k\in I\cap\mathbb Z}e(F(k))\right|
 \ll_{r,C}|I|\Lambda^{1/(2^r-2)}
 +|I|^{1-2^{2-r}}\Lambda^{-1/(2^r-2)}+1.
\]
 Replacing $|I|$ by its upper bound $H$ again gives
\eqref{eq:higher-derivative-test}; the final $1$ also covers intervals
containing at most one integer.
\end{proof}

\begin{proposition}[Gram-power exponential sum]
\label{prop:gram-exponential-sum}
Let $d\geq1$ be fixed, let $L=\log X$, and let
$I\subset[X,2X]$ be any interval.  Uniformly for real $\lambda\ne0$,
\begin{align}
 \left|\sum_{k\in I\cap\mathbb Z}\e^{i\lambda t_k^d}\right|
 \ll_d{}&X^{1-\eta_d}|\lambda|^{\eta_d}
 L^{-(d+1)\eta_d}
 \label{eq:gram-exponential-sum}\\
 &+X^{1-2^{1-d}+\eta_d}|\lambda|^{-\eta_d}
 L^{(d+1)\eta_d}+1.\nonumber
\end{align}
\end{proposition}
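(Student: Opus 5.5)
Apply \cref{lem:higher-derivative-test} with $r=d+1$ to the phase
\[
 F(u)=\frac{\lambda}{2\pi}\,t_u^d ,
\]
so that $e(F(k))=\e^{i\lambda t_k^d}$. Then $\delta_{d+1}=(2^{d+1}-2)^{-1}=\eta_d$, and $1-2^{2-(d+1)}=1-2^{1-d}$, which matches the exponent structure of \eqref{eq:gram-exponential-sum}.

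For $X$ large, \eqref{eq:phase-derivative-comparison} gives, uniformly for $u\in I\subset[X,2X]$,
\[
 F^{(d+1)}(u)=\frac{\lambda}{2\pi}\frac{d^{d+1}}{du^{d+1}}\bigl(t_u^d\bigr),\qquad
 |F^{(d+1)}(u)|\asymp_d \frac{|\lambda|}{XL^{d+1}}.
\]
So the hypothesis $\Lambda\le|F^{(d+1)}|\le C\Lambda$ holds with
\[
 \Lambda=c_d\,\frac{|\lambda|}{XL^{d+1}}
\]
and a constant $C=C_d$ independent of $\lambda$ and $I$. The uniformity in $\lambda$ is automatic because $\lambda$ only scales $F$. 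The regularity $F\in C^{d+1}$ follows from the smoothness of $\vartheta$ and the implicit function theorem, since $\vartheta'>0$ for large $t$.

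Take $H=X$, which bounds the length of $I$. The first term of \eqref{eq:higher-derivative-test} becomes
\[
 H\Lambda^{\eta_d}\asymp X\cdot|\lambda|^{\eta_d}X^{-\eta_d}L^{-(d+1)\eta_d},
\]
and the second becomes
\[
 H^{1-2^{1-d}}\Lambda^{-\eta_d}\asymp X^{1-2^{1-d}}|\lambda|^{-\eta_d}X^{\eta_d}L^{(d+1)\eta_d}.
\]
Together with the $+1$, these are exactly the three terms claimed.

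The case $d=1$ needs separate attention, since then $r=2$ and the lemma's second-derivative case is used. There the second term is $\Lambda^{-1/2}$, consistent with $H^{0}$, and $\eta_1=1/2$, so the formula still matches.

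The main point requiring care is the range of $X$. The asymptotic \eqref{eq:phase-d-plus-one} only gives \eqref{eq:phase-derivative-comparison} for $X\ge X_0(d)$. For $X<X_0$ the sum has $O_d(1)$ terms, so the $+1$ term together with the implied constant absorbs it. One also uses that the sign of $F^{(d+1)}$ is constant, which follows from the asymptotic sign $-\operatorname{sgn}\lambda$.
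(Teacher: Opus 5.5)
Your proposal is correct and is essentially the paper's proof. Like the paper, you apply the higher-derivative test with $r=d+1$ to $F(u)=\lambda t_u^d/(2\pi)$, take $\Lambda\asymp_d|\lambda|/(XL^{d+1})$ from the derivative comparison on the dyadic block, and bound every subinterval length by $H=X$. Your added remarks on small $X$, on the case $d=1$, and on the constant sign of $F^{(d+1)}$ are routine details that the paper leaves implicit.
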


\begin{proof}
By \eqref{eq:phase-d-plus-one}, throughout $[X,2X]$ the absolute value
of the $(d+1)$-st derivative of
$F(u)=\lambda t_u^d/(2\pi)$ is comparable with
\[
 \Lambda=\frac{|\lambda|}{XL^{d+1}}.
\]
Apply \cref{lem:higher-derivative-test} with $r=d+1$ and
$\delta_{d+1}=\eta_d$.  Since the derivative comparison holds on the
whole dyadic block, the same estimate is valid for every subinterval
$I\subset[X,2X]$.
\end{proof}

We record separately the elementary pair sums used after expanding a
Dirichlet-polynomial square.

\begin{lemma}[Logarithmic pair sums]
\label{lem:pair-sums}
Fix $0<\eta<1$, $\varepsilon>0$, and a closed interval
$J\subset(1/2,1)$.  Uniformly for $\sigma\in J$ and $M\geq2$,
\begin{align}
 \sum_{m<n\leq M}(mn)^{-\sigma}|\log(n/m)|^{\pm\eta}
 &\ll_{\eta,\varepsilon,J}M^{2-2\sigma+\varepsilon},
 \label{eq:pair-primal}\\
 \sum_{m<n\leq M}(mn)^{\sigma-1}|\log(n/m)|^{\pm\eta}
 &\ll_{\eta,\varepsilon,J}M^{2\sigma+\varepsilon}.
 \label{eq:pair-dual}
\end{align}
\end{lemma}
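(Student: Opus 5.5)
We need to bound $\sum_{m<n\le M}(mn)^{-\sigma}|\log(n/m)|^{\pm\eta}$. The plan is to split the pairs according to whether $n\geq 2m$ (well-separated) or $m<n<2m$ (close), and in the close range to parametrize by the gap $h=n-m\ge1$, so that $\log(n/m)\asymp h/m$. The dual sum \eqref{eq:pair-dual} is handled identically with the exponent $-\sigma$ replaced by $\sigma-1$. Note that $1-\sigma\in(0,1/2)$, so both exponents lie in $(-1,0)$, and uniformity over $\sigma\in J$ is automatic because all constants depend only on the endpoints of $J$.

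\emph{Separated pairs, $n\ge 2m$.} Here $\log 2\le\log(n/m)\le\log M$. Hence $|\log(n/m)|^{\pm\eta}\ll (\log M)^{\eta}\ll_\varepsilon M^{\varepsilon}$, with a bounded factor for the negative sign. The double sum is then at most $(\sum_{m\le M}m^{-\sigma})^2\ll_J M^{2-2\sigma}$. In the dual case we get $(\sum m^{\sigma-1})^2\ll M^{2\sigma}$.

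\emph{Close pairs, $m<n<2m$.} Write $n=m+h$ with $1\le h<m$. Then $\log(n/m)\asymp h/m$, and $(mn)^{-\sigma}\asymp m^{-2\sigma}$.

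For the plus sign, $|\log(n/m)|^{\eta}\le 1$, so the contribution is
\[
\ll\sum_{m\le M} m^{-2\sigma}\cdot m\ll M^{2-2\sigma},
\]
since $2\sigma<2$.

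For the minus sign, the factor is $\asymp (m/h)^{\eta}$, and the contribution is
\[
\ll\sum_{m\le M} m^{-2\sigma+\eta}\sum_{h<m}h^{-\eta}\ll\sum_{m\le M}m^{-2\sigma+\eta}\,m^{1-\eta}=\sum_{m\le M} m^{1-2\sigma}\ll M^{2-2\sigma},
\]
using $\eta<1$ and $2-2\sigma>0$. The dual case is the same computation with $m^{2\sigma-2}$ in place of $m^{-2\sigma}$. It gives $\sum_m m^{2\sigma-1}\ll M^{2\sigma}$, and the plus-sign version is likewise $\ll M^{2\sigma}$.

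The only point needing care, and thus the main obstacle, is the negative power of the logarithm for nearby $m,n$. There the condition $\eta<1$ is exactly what makes $\sum_h h^{-\eta}$ grow like $m^{1-\eta}$, which cancels the $m^{\eta}$ gain. The $\varepsilon$ in the statement is needed only to absorb the $(\log M)^{\eta}$ from the separated range.
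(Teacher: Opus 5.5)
Your proof is correct and is essentially the paper's argument. You use the same split into a near-diagonal range, handled via $n=m+h$ with $\log(n/m)\asymp h/m$, where $\eta<1$ gives $\sum_{h<m}h^{-\eta}\ll m^{1-\eta}$, and a separated range where the logarithm is bounded below, the variables separate, and the factor $(\log M)^{\eta}$ is absorbed into $M^{\varepsilon}$.
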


\begin{proof}
First consider $m<n\leq2m$ and write $n=m+h$.  Then
$\log(n/m)\asymp h/m$.  For the negative logarithmic power,
\[
 \sum_{m\leq M}m^{-2\sigma}
 \sum_{h\leq m}\left(\frac mh\right)^\eta
 \ll\sum_{m\leq M}m^{1-2\sigma}
 \ll_J M^{2-2\sigma}.
\]
The same calculation with $m^{2\sigma-2}$ in place of
$m^{-2\sigma}$ gives $O_J(M^{2\sigma})$ for the dual weights.
The positive logarithmic power is no larger in this near-diagonal
range.  In the complementary range $n>2m$, the negative logarithmic
power is bounded, while the positive power is $O((\log M)^\eta)$.
Separating the variables and using
$(\log M)^\eta\ll_\varepsilon M^\varepsilon$ proves both estimates.
\end{proof}

We shall also use the following one-dimensional form of the
Bombieri--Iwaniec double large sieve; see
Huxley \cite[Lemma~5.6.6]{Huxley1996}.

\begin{lemma}[Double large sieve]
\label{lem:double-large-sieve}
Let $|x_r|\leq R$ and $|y_v|\leq S$, and let $\alpha_r,\beta_v$ be
complex numbers.  Then
\begin{align}
 \left|\sum_r\sum_v\alpha_r\beta_v e(x_ry_v)\right|^2
 \ll{}&(1+RS)
 \sum_{|x_r-x_{r'}|\leq S^{-1}}|\alpha_r\alpha_{r'}|
 \nonumber\\
 &\times
 \sum_{|y_v-y_{v'}|\leq R^{-1}}|\beta_v\beta_{v'}|.
 \label{eq:double-large-sieve}
\end{align}
The same estimate holds after translating either set of points.
\end{lemma}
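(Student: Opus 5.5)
This is the one-dimensional Bombieri--Iwaniec inequality, and I would prove it by Fourier duality, with Cauchy--Schwarz applied against a weight whose Fourier transform has compact support. The translation remark is immediate. Replacing $x_r$ by $x_r+a$ multiplies $\beta_v$ by $e(ay_v)$, and replacing $y_v$ by $y_v+b$ multiplies $\alpha_r$ by $e(bx_r)$. Neither change affects $|\alpha_r|$, $|\beta_v|$, or the differences $x_r-x_{r'}$ and $y_v-y_{v'}$. So it suffices to treat centred points with $|x_r|\le R$ and $|y_v|\le S$. By rescaling $x\mapsto x/R$, $y\mapsto Ry$, which preserves $x_ry_v$, I may also assume $R=1$. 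The claim then reads $|\Sigma|^2\ll(1+S)\,A(S^{-1})\,B(1)$, where $\Sigma$ is the double sum and $A(\cdot)$, $B(\cdot)$ denote the two pair sums with the indicated closeness thresholds.

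\emph{Step 1 (reproducing the $x$-side).} Fix once and for all a Schwartz function $h$ with $h(x)=1$ for $|x|\le1$ and $\hat h$ supported in a fixed compact set. Strictly, this cannot hold exactly; I would instead take $h\ge1$ on $[-1,1]$ with $\hat h$ compactly supported and absorb the factor $h(x_r)^{-1}$ into $\alpha_r$, which is harmless because it is bounded. Put $\mu=\sum_r\alpha_r\delta_{x_r}$ and $g(y)=\hat\mu(-y)=\sum_r\alpha_re(x_ry)$. Then $g=g*\hat h$, so
\[
 \Sigma=\sum_v\beta_v g(y_v)=\int g(z)\,G(z)\,dz,\qquad G(z)=\sum_v\beta_v\hat h(y_v-z).
\]
Since $\hat h$ has compact support, $G$ is supported in $|z|\le S+O(1)$.

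\emph{Step 2 (Cauchy--Schwarz with a weight).} Choose $W\ge0$ with $W\ge1$ on $|z|\le S+O(1)$ and $\hat W$ supported in $|x|\le S^{-1}$, with $\|\hat W\|_\infty\ll 1+S$. A Fej\'er kernel at scale $S$ works when $S\ge1$; when $S<1$, a fixed kernel at scale $1$ works. Then
\[
 |\Sigma|^2\le\int|g|^2W\cdot\int|G|^2.
\]
The first factor equals $\sum_{r,r'}\alpha_r\bar\alpha_{r'}\hat W(x_{r'}-x_r)$, which is at most $(1+S)\sum_{|x_r-x_{r'}|\le S^{-1}}|\alpha_r\alpha_{r'}|$. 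The second factor equals $\sum_{v,v'}\beta_v\bar\beta_{v'}(\hat h*\widetilde{\hat h})(y_v-y_{v'})$. The kernel here has compact support of size $O(1)$ and is bounded, so this is $\ll\sum_{|y_v-y_{v'}|\le c}|\beta_v\beta_{v'}|$ for some constant $c$.

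\emph{Step 3 (the main obstacle: constants in the windows).} The delicate point is that the support constants $c$ produced above need not equal the thresholds $S^{-1}$ and $R^{-1}$ in the statement. I would handle this by a covering argument. Cover the line by intervals of length $1/(2c)$. A pair with $|y-y'|\le c$ links only $O(1)$ such intervals. Applying Cauchy--Schwarz, $|\beta_v\beta_{v'}|\le\frac12(|\beta_v|^2+|\beta_{v'}|^2)$, then bounds the enlarged-window pair sum by $O(1)$ times the diagonal-inclusive sum with window $1$. The same device adjusts the $x$-window: if the $x$-threshold comes out as a constant multiple of $S^{-1}$, rescale before choosing $W$. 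Equivalently, one can apply the whole argument to the $O(1)$ shifted or rescaled families, which changes only absolute constants. This yields the inequality as stated, with the factor $1+RS$ after undoing the scaling. Should the bookkeeping of constants become awkward, I would simply quote Huxley's Lemma~5.6.6 directly, since only the stated form is used later.
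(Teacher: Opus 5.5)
Your route is genuinely different from the paper's. The paper's proof consists only of the translation remark, which you handle exactly as it does (absorbing $e(ay_v)$ or $e(bx_r)$ into the coefficients), plus a citation of Huxley's Lemma~5.6.6, which already carries the windows $S^{-1}$ and $R^{-1}$. Your fallback is therefore literally the paper's proof. You instead reprove the one-dimensional Bombieri--Iwaniec inequality by Fourier duality. Steps~1 and~2 are sound, provided Step~1 is read with $g$ replaced by $g_0(y)=\sum_r\alpha_rh(x_r)^{-1}e(x_ry)$. Then $g_0*\hat h=g$ for even $h$, and $\Sigma=\int g_0G$. With a Fej\'er-type weight $W$ whose Fourier transform is supported in $[-S^{-1},S^{-1}]$, the $x$-side already comes out with exactly the window $S^{-1}$ and the factor $1+S$. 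The self-contained argument removes the dependence on the reference, at the price of the constant bookkeeping you flag.

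That bookkeeping, however, is wrong as written in Step~3. Applying $|\beta_v\beta_{v'}|\le\frac12(|\beta_v|^2+|\beta_{v'}|^2)$ termwise only gives $\sum_v|\beta_v|^2\,\#\{v':|y_v-y_{v'}|\le c\}$. This is not controlled by the window-$1$ sum when points cluster. For example, take $\beta_v=1$ at $y_v=0$, and $n$ further points with $\beta_v=1/n$ in a very short interval at distance between $1$ and $c$ from $0$. Then the window-$1$ sum equals $2$, but the termwise bound is at least $n+1$.

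The inequality you want is nevertheless true, but the averaging must be done on block sums. Put $I_j=[j/2,(j+1)/2)$ and $B_j=\sum_{y_v\in I_j}|\beta_v|$. Then
\[
 \sum_{|y_v-y_{v'}|\le c}|\beta_v\beta_{v'}|
 \le\sum_{|k|\le2c+1}\sum_jB_jB_{j+k}
 \le(4c+3)\sum_jB_j^2
 \le(4c+3)\sum_{|y_v-y_{v'}|\le1}|\beta_v\beta_{v'}|.
\]
Simpler still, you can drop Step~3 altogether. Take $h(x)=\frac{\pi^2}{4}\bigl(\frac{\sin(\pi x/2)}{\pi x/2}\bigr)^2$. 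It satisfies $h\ge1$ on $[-1,1]$, and $\hat h$ is supported in $[-\frac12,\frac12]$. Hence the $y$-kernel $\int\hat h(y_v-z)\overline{\hat h(y_{v'}-z)}\,dz$ vanishes unless $|y_v-y_{v'}|\le1=R^{-1}$. This gives the stated windows directly.
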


\begin{proof}
This is the cited inequality in dimension one.  Translating the
$x$-points or the $y$-points inserts factors of modulus one.  These can
be absorbed into $\beta_v$ or $\alpha_r$, respectively.
\end{proof}

\section{Approximate functional equations}

Put
\begin{equation}\label{eq:chi-definition}
 \chi(z)=\pi^{z-1/2}
 \frac{\Gamma((1-z)/2)}{\Gamma(z/2)},
 \qquad \zeta(z)=\chi(z)\zeta(1-z).
\end{equation}
We use the classical symmetric approximate functional equation.

\begin{lemma}[Symmetric approximate functional equation]
\label{lem:symmetric-afe}
Let $J\subset(0,1)$ be a closed interval, let
 $z=\sigma+iT$ with $\sigma\in J$ and $T\geq2$, and put
 $M=(T/2\pi)^{1/2}$.  Uniformly for $\sigma\in J$,
\begin{equation}\label{eq:symmetric-afe}
 \zeta(z)=\sum_{m\leq M}m^{-z}
 +\chi(z)\sum_{m\leq M}m^{z-1}
 +O_J(T^{-\sigma/2}).
\end{equation}
Here and below, a sharp cutoff $m\leq M$ means
$m\leq\lfloor M\rfloor$.
Moreover,
\begin{align}
 |\chi(\sigma+iT)|^2
 &=\left(\frac{T}{2\pi}\right)^{1-2\sigma}
 \left(1+O_J(T^{-1})\right),
 \label{eq:chi-size}\\
 \frac{\partial}{\partial T}|\chi(\sigma+iT)|^2
 &\ll_J T^{-1}|\chi(\sigma+iT)|^2.
 \label{eq:chi-variation}
\end{align}
\end{lemma}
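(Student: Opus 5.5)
The approximate functional equation is classical, and I would derive it rather than reprove it. The starting point is the Hardy--Littlewood approximate functional equation in the form of Titchmarsh, Theorem~4.13. For $0<\sigma<1$, $x,y\geq h>0$ and $2\pi xy=T$, uniformly for $\sigma$ in a closed subinterval $J$ of $(0,1)$, it gives
\[
 \zeta(z)=\sum_{m\leq x}m^{-z}+\chi(z)\sum_{m\leq y}m^{z-1}
 +O\bigl(x^{-\sigma}\bigr)+O\bigl(T^{1/2-\sigma}y^{\sigma-1}\bigr).
\]
Choosing $x=y=M=(T/2\pi)^{1/2}$, the first error term is $O(T^{-\sigma/2})$. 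The second is $O(T^{1/2-\sigma}T^{(\sigma-1)/2})=O(T^{-\sigma/2})$. Both constants depend only on $J$, since the proof in Titchmarsh gives bounds uniform for $\sigma$ in compact subsets of $(0,1)$. The sharp cutoff $m\leq\lfloor M\rfloor$ is the same as $m\leq M$, so \eqref{eq:symmetric-afe} follows.

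For \eqref{eq:chi-size}, I would start from the reflection formula and write
\[
 |\chi(\sigma+iT)|^2=\pi^{2\sigma-1}
 \frac{|\Gamma((1-\sigma-iT)/2)|^2}{|\Gamma((\sigma+iT)/2)|^2}.
\]
Stirling's formula in the form
\[
 \log|\Gamma(a+ib)|=(a-\tfrac12)\log|b|-\tfrac{\pi}{2}|b|+\tfrac12\log 2\pi+O_a(|b|^{-1}),
\]
uniform for $a$ in compacta, then applies with $b=\pm T/2$. The $-\frac{\pi}{2}|b|$ and constant terms cancel. What remains is
\[
 \bigl((\tfrac{1-\sigma}{2}-\tfrac12)-(\tfrac{\sigma}{2}-\tfrac12)\bigr)\cdot2\log(T/2)+O(T^{-1})
 =(1-2\sigma)\log(T/2)+O_J(T^{-1}).
\]
Combined with the factor $\pi^{2\sigma-1}$, this gives $(T/2\pi)^{1-2\sigma}(1+O_J(T^{-1}))$.

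For \eqref{eq:chi-variation}, I would differentiate the logarithm:
\[
 \frac{\partial}{\partial T}\log|\chi|^2=\RePart\Bigl(\tfrac{-i}{2}\psi\bigl(\tfrac{1-\sigma-iT}{2}\bigr)-\tfrac{i}{2}\psi\bigl(\tfrac{\sigma+iT}{2}\bigr)\Bigr)\cdot(\text{signs}).
\]
Using $\psi(w)=\log w+O(|w|^{-1})$, the leading terms $\log(\mp iT/2)$ contribute real parts that combine to $(1-2\sigma)/T+O(T^{-2})$. The imaginary parts of the logarithms, $\pm\pi/2$, cancel after taking real parts. This gives $\partial_T|\chi|^2=|\chi|^2\,O_J(T^{-1})$.

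The only step needing care is uniformity in $\sigma\in J$, both in Titchmarsh's error terms and in Stirling's formula. In each case this is standard, because $J$ is compact and bounded away from $0$ and $1$.
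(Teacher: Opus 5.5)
Your proposal is correct and follows the same route as the paper: it applies the Hardy--Littlewood approximate functional equation from Titchmarsh with $x=y=(T/2\pi)^{1/2}$, then uses Stirling's formula for the gamma quotient in $\chi$ and for its logarithmic derivative. One small remark: with only $\psi(w)=\log w+O(|w|^{-1})$, the derivative step gives $\partial_T\log|\chi|^2=\ImPart\psi\bigl(\tfrac{1-\sigma-iT}{2}\bigr)+\ImPart\psi\bigl(\tfrac{\sigma+iT}{2}\bigr)=O_J(T^{-1})$ rather than the sharper $(1-2\sigma)/T+O(T^{-2})$ you state, which needs the next term $-1/(2w)$; but $O_J(T^{-1})$ is exactly what \eqref{eq:chi-variation} requires.
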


\begin{proof}
The Hardy--Littlewood approximate functional equation, in its uniform
form, states that whenever $x,y\geq1$ and $2\pi xy=T$,
\[
 \zeta(z)=\sum_{m\leq x}m^{-z}
 +\chi(z)\sum_{m\leq y}m^{z-1}
 +O_J(x^{-\sigma})+O_J(T^{1/2-\sigma}y^{\sigma-1});
\]
 see Titchmarsh \cite[formula (4.12.4), Theorem 4.15]{Titchmarsh1986},
 or the formulation
used by Sourmelidis \cite[Section 2]{Sourmelidis2020}.  Taking
$x=y=(T/2\pi)^{1/2}$ makes both error terms $O_J(T^{-\sigma/2})$.
The two remaining estimates follow by applying Stirling's formula to
the quotient of gamma factors in \eqref{eq:chi-definition} and then
differentiating its logarithm.  The constants are uniform on the fixed
closed interval $J$.
\end{proof}

Under the Lindelöf hypothesis we shall use a much shorter Dirichlet
polynomial; see also Sourmelidis, Steuding, and Suriajaya
\cite[Section 1.1, formula (5)]{SourmelidisSteudingSuriajaya2022Acta}.

\begin{lemma}[Short polynomial under Lindelöf]
\label{lem:lindelof-polynomial}
Assume the Lindelöf hypothesis.  Fix $\sigma_0>1/2$ and
$0<\mu<1$.  Then there exists $\nu>0$ such that
\begin{equation}\label{eq:lindelof-polynomial}
 \zeta(\sigma+iT)=\sum_{m\leq T^\mu}m^{-\sigma-iT}
 +O_{\sigma_0,\mu}(T^{-\nu})
\end{equation}
uniformly for $\sigma_0\leq\sigma\leq1$ and all sufficiently large
$T$.
\end{lemma}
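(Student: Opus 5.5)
We will derive the statement from the standard machinery under Lindelöf: bound zeta on vertical lines to the right of $1/2$ via convexity, then shift a smoothed Dirichlet-series integral to the left. Fix $\sigma_0>1/2$ and $0<\mu<1$, and let $T$ be large.

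\emph{Step 1 (convexity).} Under the Lindelöf hypothesis, $\zeta(\tfrac12+it)\ll_\varepsilon |t|^{\varepsilon}$. Trivially $\zeta(1+\delta+it)\ll_\delta 1$. Phragmén--Lindelöf then gives $\zeta(\sigma+it)\ll_\varepsilon |t|^{\varepsilon}$ uniformly for $\sigma\geq 1/2$ and $|t|\geq 1$.

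\emph{Step 2 (Perron with a smooth weight).} Put $s=\sigma+iT$ and $x=T^\mu$. To keep the truncation clean, first use a smooth Mellin weight. Take $w(v)=\Gamma(v)$, or a compactly supported smooth cutoff $\phi$ with $\phi=1$ on $[0,1]$ and $\phi=0$ beyond $1+x^{-\kappa}$. Then
\[
\sum_m m^{-s}\phi(m/x)=\frac1{2\pi i}\int_{(c)}\zeta(s+v)\,\tilde\phi(v)\,x^{v}\,dv,\qquad c=2.
\]
Replacing $\phi$ by the sharp cutoff changes the sum by at most
\[
\sum_{x<m\leq x(1+x^{-\kappa})}m^{-\sigma}\ll x^{1-\kappa-\sigma}+x^{-\sigma}.
\]
This is a negative power of $T$ once $\kappa>1-\sigma_0$, which we can choose with $\kappa<1$ because $\sigma_0>1/2$.

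\emph{Step 3 (contour shift).} Move the contour to $\RePart v=\tfrac12-\sigma+\varepsilon'$, so that $\RePart(s+v)=\tfrac12+\varepsilon'$. This crosses the pole at $v=0$, whose residue $\tilde\phi(0)$-normalized is exactly $\zeta(s)$. It also crosses the pole of $\zeta$ at $v=1-s$, with residue $\tilde\phi(1-s)x^{1-s}$. Since $\tilde\phi$ decays rapidly, faster than any power of $|\ImPart v|/x^{\kappa}$, and $|\ImPart(1-s)|=T$ while $x^\kappa<T$, this residue is $O(T^{-A})$.

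On the new line we use Step 1, $|\zeta(s+v)|\ll (T+|v|)^{\varepsilon}$. Because of the decay of $\tilde\phi$, the integral is effectively restricted to $|\ImPart v|\ll x^{\kappa+\varepsilon}$, and $\int|\tilde\phi|\ll\log x$ there. The shifted integral is therefore
\[
\ll T^{\varepsilon}x^{1/2-\sigma+\varepsilon'}\log x\ll T^{\varepsilon-\mu(\sigma_0-1/2)+\mu\varepsilon'},
\]
which is a negative power of $T$ for small $\varepsilon,\varepsilon'$. Combining Steps 2 and 3 gives \eqref{eq:lindelof-polynomial} with some $\nu>0$, with all constants uniform for $\sigma\in[\sigma_0,1]$.

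\emph{Main obstacle.} The delicate step is the smooth-to-sharp cutoff and the tail of the contour integral. The smoothing width $x^{-\kappa}$ must be small enough that the boundary sum is negligible, yet large enough that $\tilde\phi$ decays before the height $T$ of the pole at $1-s$ and keeps the $T^{\varepsilon}$ growth harmless. The choice $1-\sigma_0<\kappa<1$ with $x^\kappa=T^{\mu\kappa}<T$ balances both requirements.
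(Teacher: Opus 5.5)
Your argument is correct, but it takes a different route from the paper. The paper does no analysis here. It quotes Gonek's conditional formula $\zeta(\sigma+iT)=\sum_{m\le X}m^{-\sigma-iT}+O_\epsilon(X^{1/2-\sigma}(T+2)^\epsilon)$, valid for $1/2\le\sigma\le1$ and $2\le X\le(T+2)^2$, and takes $X=T^\mu$, $\epsilon=\mu(\sigma_0-1/2)/2$. This gives the explicit value $\nu=\mu(\sigma_0-1/2)/2$.

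You instead rederive essentially that bound in the only range needed, $X=T^\mu$ with $\mu<1$. Your steps are:
\begin{itemize}
\item Lindel\"of plus Phragm\'en--Lindel\"of on the line $\RePart(s+v)=1/2+\varepsilon'$;
\item a Mellin--Perron integral with a cutoff of transition width $x^{-\kappa}$;
\item a contour shift that picks up $\zeta(s)$ at $v=0$ (because $\tilde\phi$ has a simple pole there with residue $\phi(0)=1$, which is what your phrase ``$\tilde\phi(0)$-normalized'' should say) and a negligible residue at $v=1-s$;
\item a trivial smooth-to-sharp comparison costing $x^{1-\kappa-\sigma}$.
\end{itemize}

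The bookkeeping holds up. On the shifted line, $|v|\ge\sigma_0-1/2-\varepsilon'$ and $\tilde\phi(v)\ll 1/|v|$. Repeated integration by parts gives $\tilde\phi(v)\ll_j|v|^{-1}(x^\kappa/|v|)^j$, which disposes of both the tail of the integral and the residue at height $T$, since $x^\kappa=T^{\mu\kappa}$ with $\mu\kappa<1$. Your two error terms, $T^{\mu(1-\kappa-\sigma_0)}$ and $T^{\varepsilon+\mu(1/2-\sigma_0+\varepsilon')}\log T$, are negative powers once $1-\sigma_0<\kappa\le1$ and $\varepsilon,\varepsilon'$ are small, uniformly in $\sigma\in[\sigma_0,1]$.

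Your version is self-contained and shows where the saving comes from: the contour term reproduces Gonek's $x^{1/2-\sigma}T^{\varepsilon}$. It also shows why $\mu<1$ is used, namely that the residue at $v=1-s$ must be negligible. The citation buys brevity, an explicit $\nu$, and a wider range ($X$ up to $T^2$, $\sigma$ down to $1/2$) that the paper does not need.

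One small correction: the weight $\Gamma(v)$ you mention in passing would not fit your Step 2. The trivial comparison of $\sum_m m^{-s}\exp(-m/x)$ with the sharp sum costs about $x^{1-\sigma}$. The compactly supported $\phi$ you actually use is the right choice.
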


\begin{proof}
By Gonek \cite[Theorem 2.1 and formula (12.1)]{Gonek2012}, under the
Lindelöf hypothesis, for every $\epsilon>0$,
\[
 \zeta(\sigma+iT)=\sum_{m\leq X}m^{-\sigma-iT}
 +O_\epsilon\bigl(X^{1/2-\sigma}(T+2)^\epsilon\bigr),
\]
uniformly for $1/2\leq\sigma\leq1$, $2\leq X\leq(T+2)^2$, and all
sufficiently large $T$.  Take $X=T^\mu$ and
$\epsilon=\mu(\sigma_0-1/2)/2$.  Then the error is
$O_{\sigma_0,\mu}(T^{-\nu})$ uniformly for
$\sigma_0\leq\sigma\leq1$, where
$\nu=\mu(\sigma_0-1/2)/2>0$.
\end{proof}

\section{The unconditional approximation}

\begin{proof}[Proof of \cref{thm:unconditional-approximation}]
Choose
\begin{equation}\label{eq:sigma-zero-choice}
 \sigma_d<\sigma_0<\min_{s\in K}\RePart s,
\end{equation}
and write $s=\sigma+i\tau$.  The quantities $\sigma$ and $\tau$ range
over fixed bounded intervals.  On a dyadic block $X<k\leq2X$, put
\[
 z_k=s+it_k^d,\qquad T_k=\ImPart z_k=\tau+t_k^d,
 \qquad M_k=\left(\frac{T_k}{2\pi}\right)^{1/2}.
\]
By \eqref{eq:t-asymptotic}, uniformly for $s\in K$,
\begin{equation}\label{eq:block-sizes}
 T_k\asymp_K\frac{X^d}{(\log X)^d},\qquad
 M_k\asymp_K\frac{X^{d/2}}{(\log X)^{d/2}}.
\end{equation}
For large $X$, both $T_k$ and $M_k$ are increasing functions of $k$.
Let $M_X=C_KX^{d/2}(\log X)^{-d/2}$, with $C_K$ large enough that
$M_k\leq M_X$ on the block.  Since $C_K$ is fixed,
\eqref{eq:block-sizes} also gives the two-sided comparison
$M_k\asymp_K M_X$.

Apply \cref{lem:symmetric-afe} and subtract the absolutely convergent
series \eqref{eq:smooth-series}.  We obtain
\begin{equation}\label{eq:afe-decomposition}
 \zeta(z_k)-\zeta_Y(z_k)
 =A_{k,Y}(s)+R_{k,Y}(s)+C_k(s)+E_k(s),
\end{equation}
where
\begin{align}
 A_{k,Y}(s)&=\sum_{\substack{m\leq M_k\\m\notin\Sy_Y}}m^{-z_k},
 \label{eq:primal-definition}\\
 R_{k,Y}(s)&=-\sum_{\substack{m>M_k\\m\in\Sy_Y}}m^{-z_k},
 \label{eq:smooth-tail-definition}\\
 C_k(s)&=\chi(z_k)\sum_{m\leq M_k}m^{z_k-1},
 \label{eq:dual-definition}\\
 E_k(s)&\ll_K T_k^{-\sigma/2}.
 \label{eq:afe-error}
\end{align}
For fixed $Y$, absolute convergence in \eqref{eq:smooth-series} gives
\begin{equation}\label{eq:smooth-tail}
 \sup_{s\in K}\max_{X<k\leq2X}|R_{k,Y}(s)|
 \leq\sum_{\substack{m\in\Sy_Y\\m>c_KX^{d/2}(\log X)^{-d/2}}}
 m^{-\sigma_0}=o_{K,Y}(1).
\end{equation}
Also, \eqref{eq:block-sizes} and \eqref{eq:afe-error} show that the
block mean square of $E_k$ tends to zero.

We now estimate $A_{k,Y}$.  Put
\begin{equation}\label{eq:D-Y}
 D_Y:=\sum_{m\notin\Sy_Y}m^{-2\sigma_0}.
\end{equation}
Since $m\notin\Sy_Y$ implies $m>Y$,
\begin{equation}\label{eq:D-Y-decay}
 D_Y\leq\sum_{m>Y}m^{-2\sigma_0}
 \ll_{\sigma_0}Y^{1-2\sigma_0}\longrightarrow0.
\end{equation}
Upon expanding the square of \eqref{eq:primal-definition}, the diagonal
is bounded by $D_Y$.  For an off-diagonal pair $m<n\leq M_X$, the
condition $n\leq M_k$ selects either the empty set or a terminal integer
subinterval $I_n(s)$ of $(X,2X]$.  Hence
\cref{prop:gram-exponential-sum}, with
$\lambda=\log(n/m)$, applies to the $k$-sum.  The bounded number $\tau$
only contributes the unit factor $\exp(i\tau\log(n/m))$.

Using \cref{lem:pair-sums} with $\eta=\eta_d$ and absorbing powers of
$\log X$ into $X^\varepsilon$, we obtain, uniformly for $s\in K$,
\begin{align}
 \frac1X\sum_{X<k\leq2X}|A_{k,Y}(s)|^2
 \ll{}&D_Y
+X^{d(1-\sigma_0)-\eta_d+\varepsilon}
\label{eq:primal-mean-square}\\
&+X^{d(1-\sigma_0)-2^{1-d}+\eta_d+\varepsilon}
+X^{d(1-\sigma_0)-1+\varepsilon}.\nonumber
\end{align}
For clarity, the first off-diagonal term before logarithmic factors are
absorbed is
\[
 X^{-\eta_d}(\log X)^{-(d+1)\eta_d}
 M_X^{2-2\sigma_0+\varepsilon},
\]
which explains the exponent $d(1-\sigma_0)-\eta_d$ in
\eqref{eq:primal-mean-square}.

It remains to treat the dual sum.  Put
$w_s(u)=|\chi(s+it_u^d)|^2$.  From
\eqref{eq:chi-size}, \eqref{eq:chi-variation}, and
$\frac{d}{du}(t_u^d)/t_u^d\ll_d1/X$ on $[X,2X]$, one has
\begin{equation}\label{eq:dual-weight-variation}
 \sup_{X\leq u\leq2X}w_s(u)
 \ll_K M_X^{2-4\sigma},\qquad
 \int_X^{2X}|w_s'(u)|\,du
 \ll_K\sup_{X\leq u\leq2X}w_s(u).
\end{equation}
The first estimate here uses both sides of $M_k\asymp_KM_X$, because
$2-4\sigma<0$.  After the square of \eqref{eq:dual-definition} is
expanded, the cutoff $n\leq M_k$ gives a terminal interval $I_n(s)$.
Abel summation gives, for every real $\lambda$,
\begin{align}
 \left|\sum_{k\in I_n(s)}w_s(k)\e^{i\lambda t_k^d}\right|
 \leq{}&\left(\sup_{u\in I_n(s)}w_s(u)
 +\int_{I_n(s)}|w_s'(u)|\,du\right)\nonumber\\
 &\times\max_{J\subseteq I_n(s)}
 \left|\sum_{k\in J\cap\mathbb Z}\e^{i\lambda t_k^d}\right|.
 \label{eq:dual-abel}
\end{align}
Thus the all-subinterval estimate
\eqref{eq:gram-exponential-sum} applies to every dual off-diagonal pair.

The dual diagonal is, uniformly on $K$,
\begin{align*}
 \frac1X\sum_{X<k\leq2X}w_s(k)
 \sum_{m\leq M_k}m^{2\sigma-2}
 &\ll_K M_X^{2-4\sigma}M_X^{2\sigma-1}\\
 &\ll_K X^{d(1/2-\sigma_0)+\varepsilon}.
\end{align*}
For the off-diagonal, \eqref{eq:pair-dual} gives the factor
$M_X^{2\sigma+\varepsilon}$; multiplying by the first factor in
\eqref{eq:dual-weight-variation} leaves
$M_X^{2-2\sigma+\varepsilon}$.  Consequently,
\begin{align}
 \frac1X\sum_{X<k\leq2X}|C_k(s)|^2
 \ll{}&X^{d(1/2-\sigma_0)+\varepsilon}
 +X^{d(1-\sigma_0)-\eta_d+\varepsilon}
 \label{eq:dual-mean-square}\\
 &+X^{d(1-\sigma_0)-2^{1-d}+\eta_d+\varepsilon}
 +X^{d(1-\sigma_0)-1+\varepsilon}.\nonumber
\end{align}

The choice \eqref{eq:sigma-zero-choice} says
$d(1-\sigma_0)<\eta_d$.  Moreover,
\begin{equation}\label{eq:savings-comparison}
 2^{1-d}-\eta_d\geq\eta_d\qquad(d\geq1).
\end{equation}
Therefore every power of $X$ in
\eqref{eq:primal-mean-square} and
\eqref{eq:dual-mean-square} is negative when $\varepsilon>0$ is chosen
sufficiently small.  By \eqref{eq:afe-decomposition}, the elementary
inequality $|a+b+c+d|^2\leq4(|a|^2+|b|^2+|c|^2+|d|^2)$, and
\eqref{eq:smooth-tail},
\begin{equation}\label{eq:dyadic-approximation}
 \sup_{s\in K}\frac1X\sum_{X<k\leq2X}
 |\zeta(z_k)-\zeta_Y(z_k)|^2
 \ll_K D_Y+o_{K,Y}(1).
\end{equation}

Decompose $[2,N]$ into dyadic blocks.  The displayed negative powers
give some fixed $0<\kappa<1$.  Before normalization, a subinterval
$I\subset(X,2X]$ contributes at most
\[
 |I|D_Y+O_K(X^{1-\kappa})+o_{K,Y}(1)|I|;
\]
the diagonal scales with $|I|$, while the off-diagonal bound is valid
on every subinterval by \cref{prop:gram-exponential-sum} and
\eqref{eq:dual-abel}.  This also treats the final partial block.
Blocks below a fixed threshold contribute $o(1)$ after division by
$N$, and $\sum_{2^j\leq N}2^{j(1-\kappa)}\ll N^{1-\kappa}$.
Consequently,
\[
 \limsup_{N\to\infty}\sup_{s\in K}\frac1N\sum_{k=2}^{N}
 |\zeta(s+it_k^d)-\zeta_Y(s+it_k^d)|^2\ll_K D_Y.
\]
Finally let $Y\to\infty$ and use \eqref{eq:D-Y-decay}.
\end{proof}

\begin{remark}[The baseline value $11/12$]
For $d=2$, \eqref{eq:phase-d-plus-one} becomes
\[
 (t_u^2)'''=-\frac{16\pi^2}{u\log^3u}
 \left(1+o(1)\right),
\]
and \eqref{eq:gram-exponential-sum} reads
\[
 \sum_{k\in I}\e^{i\lambda t_k^2}
 \ll \frac{X^{5/6}|\lambda|^{1/6}}{\sqrt{\log X}}
 +\frac{X^{2/3}\sqrt{\log X}}{|\lambda|^{1/6}}+1.
\]
Since the symmetric Dirichlet polynomial has length
$M_X\asymp X/\log X$, its first normalized off-diagonal contribution
is $X^{11/6-2\sigma_0+\varepsilon}$.  It tends to zero precisely when
$\sigma_0>11/12$.  This is the quadratic instance of
$d(1-\sigma_0)<\eta_d$.  The next section gains an additional saving
by first grouping all pairs with the same reduced ratio.
\end{remark}

\section{The quadratic improvement}

\begin{proof}[Proof of \cref{thm:quadratic-improvement}]
Fix
\[
 \frac78<\sigma_0<\min_{s\in K}\RePart s,
 \qquad L=\log X,
 \qquad M=M_X\asymp_K\frac XL,
 \qquad D=\frac{X}{L^2}.
\]
On $X<k\leq2X$ retain the notation $z_k,T_k,M_k,D_Y$ from the
preceding section, now with $d=2$.  Choose
\begin{equation}\label{eq:quadratic-short-length}
 0<\theta<\frac1{12},\qquad
 u_0=\frac34-\theta,\qquad U=X^{u_0}.
\end{equation}
Partition $(X,2X]$ into consecutive intervals $J$ of length between
$U/2$ and $U$, apart from at most one shorter final interval.

The sharp cutoff $M_k$ changes slowly on $J$.  Indeed,
\eqref{eq:t-asymptotic} and the definition of $M_k$ give
\[
 \frac{d}{du}\left(\frac{\tau+t_u^2}{2\pi}\right)^{1/2}
 \ll_K\frac1L\qquad(X\leq u\leq2X).
\]
Put $M_J=\min_{k\in J}\lfloor M_k\rfloor$.  Then
\begin{equation}\label{eq:quadratic-cutoff-change}
 \max_{k\in J}|\lfloor M_k\rfloor-M_J|
 \ll_K1+\frac UL.
\end{equation}
Replacing $M_k$ by $M_J$ in the primal polynomial
\eqref{eq:primal-definition} changes it pointwise by at most
\[
 \left(1+\frac UL\right)M^{-\sigma_0}
 \ll_K X^{u_0-\sigma_0+\varepsilon}.
\]
For the dual polynomial, \eqref{eq:chi-size} gives
$|\chi(z_k)|\ll_KM^{1-2\sigma}$, so its cutoff shell is bounded by
\[
 |\chi(z_k)|\left(1+\frac UL\right)M^{\sigma-1}
 \ll_K\left(1+\frac UL\right)M^{-\sigma}
 \ll_KX^{u_0-\sigma_0+\varepsilon}.
\]
This is $o_K(1)$ because $u_0<3/4<\sigma_0$.  Its contribution to
either block mean square is
\begin{equation}\label{eq:quadratic-shell-square}
 O_K\left(X^{2(u_0-\sigma_0)+\varepsilon}\right).
\end{equation}
It therefore remains to estimate the two polynomials with the fixed
cutoff $M_J$ on each $J$.

We begin with the primal polynomial.  Write every off-diagonal pair
uniquely as
\[
 n=ar,\qquad m=br,\qquad a>b,\qquad(a,b)=1.
\]
After all terms with the same ratio $a/b$ are grouped, their coefficient,
apart from the fixed unit factor $\exp(i\tau\log(a/b))$, is
\begin{equation}\label{eq:quadratic-primal-coefficient}
 c^+_{a,b,J}(s)
 =(ab)^{-\sigma}
 \sum_{\substack{r\leq M_J/a\\ar,br\notin\Sy_Y}}r^{-2\sigma}.
\end{equation}
Consequently,
\begin{equation}\label{eq:quadratic-primal-coefficient-bound}
 |c^+_{a,b,J}(s)|\ll_K(ab)^{-\sigma_0}.
\end{equation}

Put $a\asymp A$, $b\asymp B$, $A\geq B$, and $P=AB$.  Distinct
reduced fractions in this box satisfy
\begin{equation}\label{eq:quadratic-ratio-spacing}
 \left|\log\frac ab-\log\frac{a'}{b'}\right|\gg\frac1P.
\end{equation}
Indeed, $|ab'-a'b|\geq1$, and all ratios in the box are comparable.
After the logarithms are centered, they lie in an interval of bounded
length.  Center also $t_k^2/(2\pi)$ at one point of $J$.  By
\eqref{eq:t-asymptotic}, these centered phase points have span
\begin{equation}\label{eq:quadratic-phase-span}
 S_J\asymp DU
\end{equation}
and mutual spacing $\asymp D$.  Hence their close-pair sum in
\cref{lem:double-large-sieve} is diagonal and is $O(U)$.  On the
frequency side, \eqref{eq:quadratic-ratio-spacing} and
\eqref{eq:quadratic-primal-coefficient-bound} give
\[
 \sum_{|x_{a,b}-x_{a',b'}|\ll S_J^{-1}}
 |c^+_{a,b,J}(s)c^+_{a',b',J}(s)|
 \ll_KP^{1-2\sigma_0}\left(1+\frac{P}{S_J}\right),
\]
where $x_{a,b}=\log(a/b)$.  The double large sieve, summed over the
$O(X/U)$ intervals $J$ and normalized by $X$, therefore bounds this
dyadic ratio box by
\begin{equation}\label{eq:quadratic-primal-dls-box}
 \ll_KD^{1/2}P^{1/2-\sigma_0}
       +U^{-1/2}P^{1-\sigma_0}.
\end{equation}

There is a complementary pointwise estimate.  For
$\lambda=\log(a/b)$, equations
\eqref{eq:phase-derivative-comparison} and
\eqref{eq:higher-derivative-test}, used with the actual interval length
$U$ and $r=3$, give
\begin{align}
 \frac1U\left|\sum_{k\in J}\e^{i\lambda t_k^2}\right|
 \ll{}&X^{-1/6}L^{-1/2}|\lambda|^{1/6}
 \nonumber\\
 &+X^{1/6}U^{-1/2}L^{1/2}|\lambda|^{-1/6}+U^{-1}.
 \label{eq:quadratic-short-cubic}
\end{align}
The proof of \cref{lem:pair-sums}, applied to the coprime variables
$a,b$, shows that the sum of either logarithmic power, weighted by
$(ab)^{-\sigma_0}$, over the box is
$O_{K,\varepsilon}(P^{1-\sigma_0+\varepsilon})$.  For example, in the
only delicate case $A\asymp B$, writing $a=b+h$ gives
\[
 A^{-2\sigma_0}\sum_{b\asymp A}\sum_{1\leq h\ll A}
 \left(\frac Ah\right)^{1/6}
 \ll A^{2-2\sigma_0}\asymp P^{1-\sigma_0}.
\]
Because $u_0>2/3$, all three terms in
\eqref{eq:quadratic-short-cubic} yield the normalized box estimate
\begin{equation}\label{eq:quadratic-primal-pointwise-box}
 \ll_{K,\varepsilon}X^{-1/6+\varepsilon}P^{1-\sigma_0}.
\end{equation}

Use \eqref{eq:quadratic-primal-pointwise-box} for
$P\leq X^{4/3}$ and the first term of
\eqref{eq:quadratic-primal-dls-box} for $P>X^{4/3}$.  The two estimates
meet at $P=X^{4/3}$; since $1/2-\sigma_0<0$, dyadic summation gives
\begin{align}
 \frac1X\sum_{X<k\leq2X}|A_{k,Y}(s)|^2
 \ll_{K,\varepsilon}{}&D_Y
 +X^{-1/6+\frac43(1-\sigma_0)+\varepsilon}
 \nonumber\\
 &+U^{-1/2}M^{2-2\sigma_0+\varepsilon}
 +X^{2(u_0-\sigma_0)+\varepsilon}.
 \label{eq:quadratic-primal-bound}
\end{align}

For completeness, consider the dual polynomial with cutoff $M_J$.
For $a\asymp A$, $b\asymp B$, $A\geq B$, its ratio coefficient before
the factor $w_s(k)=|\chi(z_k)|^2$ is
\begin{equation}\label{eq:quadratic-dual-coefficient}
 c^-_{a,b,J}(s)
 =(ab)^{\sigma-1}\sum_{r\leq M_J/a}r^{2\sigma-2}.
\end{equation}
It satisfies
\[
 |c^-_{a,b,J}(s)|
 \ll_KM^{2\sigma-1}A^{-\sigma}B^{\sigma-1},
\]
and hence
\begin{equation}\label{eq:quadratic-dual-coefficient-square}
 \sum_{\substack{a\asymp A,\ b\asymp B\\(a,b)=1}}
 |c^-_{a,b,J}(s)|^2
 \ll_KM^{4\sigma-2}\left(\frac AB\right)^{1-2\sigma}.
\end{equation}
On $J$, \eqref{eq:chi-size} also gives
\[
 \sum_{k\in J}|w_s(k)|^2\ll_KUM^{4-8\sigma}.
\]
Thus the same double-large-sieve calculation, now with $w_s(k)$ as
the coefficient of the phase point, gives after normalization
\[
 \ll_KD^{1/2}M^{1-2\sigma_0}
       +U^{-1/2}M^{2-2\sigma_0+\varepsilon}.
\]
Here we used
$(A/B)^{1/2-\sigma_0}\leq1$ and
$A^{1-\sigma_0}B^{\sigma_0}\leq M$.  Adding the dual diagonal and the
cutoff shell gives
\begin{align}
 \frac1X\sum_{X<k\leq2X}|C_k(s)|^2
 \ll_{K,\varepsilon}{}&M^{1-2\sigma_0+\varepsilon}
 +D^{1/2}M^{1-2\sigma_0+\varepsilon}
 \nonumber\\
 &+U^{-1/2}M^{2-2\sigma_0+\varepsilon}
 +X^{2(u_0-\sigma_0)+\varepsilon}.
 \label{eq:quadratic-dual-bound}
\end{align}

Every term on the right of
\eqref{eq:quadratic-primal-bound} and
\eqref{eq:quadratic-dual-bound}, apart from $D_Y$, tends to zero after
$\varepsilon>0$ is chosen sufficiently small.  The decisive exponent is
\[
 -\frac16+\frac43(1-\sigma_0)
 =-\frac43\left(\sigma_0-\frac78\right)<0.
\]
The estimates for the smooth tail and for the error term in the
approximate functional equation are unchanged.  Therefore
\[
 \sup_{s\in K}\frac1X\sum_{X<k\leq2X}
 |\zeta(s+it_k^2)-\zeta_Y(s+it_k^2)|^2
 \ll_KD_Y+o_{K,Y}(1).
\]

The argument is uniform on subintervals of $(X,2X]$: partition such an
interval into the same $U$-blocks.  At most two remaining pieces have
length less than $U$ and are bounded trivially; after normalization
their contribution is
\[
 \ll_{K,\varepsilon}\frac UX M^{2-2\sigma_0+\varepsilon}
 =o_K(1).
\]
The dyadic passage at the end of the general proof now applies, and
letting $Y\to\infty$ proves \eqref{eq:main-approximation} for $d=2$.
\end{proof}

\begin{remark}
Within this double-large-sieve and pointwise hybrid, the decisive
crossover is unchanged for every admissible choice $2/3<u<3/4$.
Thus this argument stops at $7/8$.  We do not claim that the threshold
is optimal; moving it to the left would require an additional estimate,
for example a stronger short-interval exponential-sum bound.
\end{remark}

\section{The Lindelöf case}

\begin{proof}[Proof of \cref{thm:conditional-approximation}]
Choose
\[
 \frac12<\sigma_0<\min_{s\in K}\RePart s
\]
and then choose $\mu>0$ so small that
\begin{equation}\label{eq:mu-choice}
 0<\mu<\min\left\{\frac1d,
 \frac{\eta_d}{2d(1-\sigma_0)}\right\}.
\end{equation}
On $X<k\leq2X$, retain the notation
$z_k=s+it_k^d$ and $T_k=\ImPart z_k$.  By
\cref{lem:lindelof-polynomial},
\begin{equation}\label{eq:block-lindelof-polynomial}
 \zeta(z_k)=\sum_{m\leq T_k^\mu}m^{-z_k}+O_K(T_k^{-\nu}),
\end{equation}
uniformly for $s\in K$.  The length is
\begin{equation}\label{eq:conditional-length}
 T_k^\mu\asymp_K X^{d\mu}(\log X)^{-d\mu}=:M_X.
\end{equation}

Subtracting \eqref{eq:smooth-series}, we write
\begin{equation}\label{eq:conditional-decomposition}
 \zeta(z_k)-\zeta_Y(z_k)
 =A^{(\mu)}_{k,Y}(s)+R^{(\mu)}_{k,Y}(s)+O_K(T_k^{-\nu}),
\end{equation}
where
\begin{align*}
 A^{(\mu)}_{k,Y}(s)
 &=\sum_{\substack{m\leq T_k^\mu\\m\notin\Sy_Y}}m^{-z_k},\\
 R^{(\mu)}_{k,Y}(s)
 &=-\sum_{\substack{m>T_k^\mu\\m\in\Sy_Y}}m^{-z_k}.
\end{align*}
For fixed $Y$, the second sum is $o_{K,Y}(1)$ uniformly on the block,
by the same absolute-convergence argument as in
\eqref{eq:smooth-tail}.  The error in
\eqref{eq:block-lindelof-polynomial} is also negligible in block mean
square.

Expand the square of $A^{(\mu)}_{k,Y}$.  Its diagonal is at most $D_Y$.
For $m<n$, the condition $n\leq T_k^\mu$ again selects a terminal
subinterval, so \cref{prop:gram-exponential-sum} applies.  By
\cref{lem:pair-sums},
\begin{align}
 \frac1X\sum_{X<k\leq2X}|A^{(\mu)}_{k,Y}(s)|^2
 \ll{}&D_Y
+X^{-\eta_d+2d\mu(1-\sigma_0)+\varepsilon}
\label{eq:conditional-mean-square}\\
&+X^{-2^{1-d}+\eta_d+2d\mu(1-\sigma_0)+\varepsilon}
+X^{-1+2d\mu(1-\sigma_0)+\varepsilon}.\nonumber
\end{align}
The first exponent is decisive.  Indeed,
 \eqref{eq:mu-choice} gives
 $2d\mu(1-\sigma_0)<\eta_d$, while
 \eqref{eq:savings-comparison} shows that the second exponent is no
 larger.  Moreover,
 $2d\mu(1-\sigma_0)<\eta_d<1$, so the third exponent is negative as
 well.  Choose
$\varepsilon>0$ below the resulting fixed margin.  Then all three
terms tend to zero.

Combining \eqref{eq:conditional-decomposition} and
\eqref{eq:conditional-mean-square} gives the dyadic estimate
\[
 \sup_{s\in K}\frac1X\sum_{X<k\leq2X}
 |\zeta(z_k)-\zeta_Y(z_k)|^2
 \ll_K D_Y+o_{K,Y}(1).
\]
The three displayed negative exponents give a fixed power saving
$X^{-\kappa'}$.  Hence the subinterval and geometric-summation argument
at the end of the preceding section applies verbatim, first giving an
upper limit $\ll_KD_Y$ and then, as $Y\to\infty$,
\eqref{eq:main-approximation}.

For $d=2$, the safe choice \eqref{eq:mu-choice} reads
\[
 0<\mu<\min\left\{\frac12,
 \frac{1}{24(1-\sigma_0)}\right\}.
\]
Thus the same calculation covers every closed substrip to the right of
$1/2$ in the motivating quadratic case.
\end{proof}

\section{Passage to the second moment}

\begin{proof}[Proof of \cref{cor:second-moment}]
Fix $Y$.  By the absolute convergence of \eqref{eq:smooth-series},
\begin{align}
 |\zeta_Y(s+it_k^d)|^2
 =\sum_{m,n\in\Sy_Y}(mn)^{-\sigma}
 \exp\left(i(\tau+t_k^d)\log\frac nm\right),
 \label{eq:finite-product-expansion}
\end{align}
where $s=\sigma+i\tau$.  If $m\ne n$, then
$\lambda=\log(n/m)$ is fixed and nonzero.  Applying
\cref{prop:gram-exponential-sum} on dyadic blocks gives
\begin{equation}\label{eq:fixed-frequency-limit}
 \frac1N\sum_{k=2}^{N}\e^{it_k^d\lambda}\longrightarrow0.
\end{equation}
To justify the limit uniformly for $s\in K$, first truncate the
absolutely convergent double series in
\eqref{eq:finite-product-expansion}.  There are then only finitely many
nonzero frequencies, and \eqref{eq:fixed-frequency-limit} is uniform.
The remaining double-series tail is bounded uniformly by
\[
 \left(\sum_{\substack{m\in\Sy_Y\\m>M}}m^{-\sigma_K}\right)
 \left(2\sum_{m\in\Sy_Y}m^{-\sigma_K}
 +\sum_{\substack{m\in\Sy_Y\\m>M}}m^{-\sigma_K}\right),
 \qquad \sigma_K:=\min_{s\in K}\RePart s>\frac12,
\]
which tends to zero as $M\to\infty$.  Therefore
\begin{equation}\label{eq:finite-euler-moment}
 \frac1N\sum_{k=2}^{N}|\zeta_Y(s+it_k^d)|^2
 \longrightarrow
 C_Y(\sigma):=\sum_{m\in\Sy_Y}m^{-2\sigma}
 =\prod_{p\leq Y}(1-p^{-2\sigma})^{-1}
\end{equation}
uniformly for $s\in K$.

As $Y\to\infty$, one has
\begin{equation}\label{eq:diagonal-limit}
 C_Y(\sigma)\longrightarrow\zeta(2\sigma)
\end{equation}
uniformly for $\sigma\geq\sigma_K$, since the omitted terms are bounded
by $\sum_{m>Y}m^{-2\sigma_K}$.

It remains to transfer the limit from $\zeta_Y$ to $\zeta$.  Put
\begin{align*}
 a_N(s)&=\left(\frac1N\sum_{k=2}^{N}
 |\zeta(s+it_k^d)|^2\right)^{1/2},\\
 b_{N,Y}(s)&=\left(\frac1N\sum_{k=2}^{N}
 |\zeta_Y(s+it_k^d)|^2\right)^{1/2}.
\end{align*}
Also set
\[
 E_Y:=\limsup_{N\to\infty}\sup_{s\in K}\frac1N\sum_{k=2}^{N}
 |\zeta(s+it_k^d)-\zeta_Y(s+it_k^d)|^2.
\]
Minkowski's inequality gives
\begin{equation}\label{eq:minkowski-transfer}
 |a_N(s)-b_{N,Y}(s)|
 \leq\left(\frac1N\sum_{k=2}^{N}
 |\zeta(s+it_k^d)-\zeta_Y(s+it_k^d)|^2\right)^{1/2}.
\end{equation}
For fixed $Y$, the triangle inequality gives
\begin{align*}
 |a_N(s)-\sqrt{\zeta(2\RePart s)}|
 \leq{}&|a_N(s)-b_{N,Y}(s)|
 +|b_{N,Y}(s)-\sqrt{C_Y(\RePart s)}|\\
 &+|\sqrt{C_Y(\RePart s)}-\sqrt{\zeta(2\RePart s)}|.
\end{align*}
Take the upper limit as $N\to\infty$, uniformly on $K$.  The middle
term vanishes by \eqref{eq:finite-euler-moment}, and the first is
bounded by $E_Y^{1/2}$.  Now let $Y\to\infty$ and use
\cref{thm:unconditional-approximation}, or
\cref{thm:quadratic-improvement} when $d=2$, together with
\eqref{eq:diagonal-limit}.  It follows that
$a_N(s)\to\sqrt{\zeta(2\RePart s)}$ uniformly on $K$.  These quantities
are uniformly bounded for large $N$, so squaring proves
\eqref{eq:moment-limit}.

Under the Lindelöf hypothesis, use
\cref{thm:conditional-approximation} in
\eqref{eq:minkowski-transfer}; the remainder of the argument is
unchanged.
\end{proof}

\section{Discrete universality}

We prove \cref{thm:universality} using twisted finite Euler
products, multiple integrals over finite cubes, and uniform
distribution modulo one.  

If $\mathcal P$ is a finite set of primes and
$\boldsymbol\omega=(\omega_p)_{p\in\mathcal P}\in[0,1)^{\mathcal P}$,
put
\begin{equation}\label{eq:voronin-twisted-product}
 Z_{\mathcal P}(s,\boldsymbol\omega)
 :=\prod_{p\in\mathcal P}
 \left(1-\exp(-2\pi i\omega_p)p^{-s}\right)^{-1}.
\end{equation}
We write
\[
 d\boldsymbol\omega_{\mathcal P}
 :=\prod_{p\in\mathcal P}d\omega_p
\]
and $\operatorname{vol}_{\mathcal P}$ for ordinary Euclidean volume on
$[0,1)^{\mathcal P}$.  Thus every phase integral below is an ordinary
finite-dimensional iterated integral over unit intervals; no
normalizing factor is needed because the unit cube has volume one.
A bounded open set $G$ is called admissible if, for every $\delta>0$,
the outer parallel set
\[
 G_\delta:=\{s\in\C:\operatorname{dist}(s,G)<\delta\}
\]
has connected complement.

We use the following standard approximation input.

\begin{lemma}[Voronin's twisted-product lemma]
\label{lem:voronin-twisted-product}
Let $G$ be admissible with
$\overline G\subset\{s\in\C:1/2<\RePart s<1\}$, and let $F$ be analytic
and nonvanishing on a neighborhood of $\overline G$.  Given
$z>0$ and $\rho>0$, there are a finite set of primes $\mathcal M$ and
phases $\boldsymbol\theta\in[0,1)^{\mathcal M}$ such that
\[
 \{p:p\leq z\}\subset\mathcal M,
 \qquad
 \max_{s\in\overline G}
 |Z_{\mathcal M}(s,\boldsymbol\theta)-F(s)|<\rho.
\]
\end{lemma}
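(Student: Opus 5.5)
This is Voronin's classical lemma, and the plan is to prove it by the standard Hilbert-space argument. Let $\sigma_1=\min_{s\in\overline G}\RePart s>1/2$. Since $F$ is nonvanishing on a neighborhood of $\overline G$ and $\overline G$ is contained in an admissible set $G_\delta$ whose complement is connected, a branch of $\log F$ exists on $G_\delta$ for small $\delta$. By Mergelyan's (indeed Runge's) theorem I will approximate $\log F$ uniformly on $\overline{G_{\delta/2}}$ by a polynomial $g$. Exponentiating, it then suffices to find $\mathcal M\supset\{p\le z\}$ and $\boldsymbol\theta$ with
\[
\log Z_{\mathcal M}(s,\boldsymbol\theta)=\sum_{p\in\mathcal M}\sum_{j\ge1}\frac{\e^{-2\pi ij\theta_p}}{jp^{js}}
\]
uniformly close to $g$ on $\overline G$. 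The terms with $j\ge2$ converge absolutely and uniformly because $\sigma_1>1/2$. For primes $p>z_1$, with $z_1$ large, they contribute a uniformly small tail. The contribution of the finitely many primes $p\le z_1$ is a fixed function $h(s)$, once we agree to take $\theta_p=0$ for those primes. So the task reduces to finding, for a large finite set of primes $z_1<p\le z_2$, phases with $\sum_{z_1<p\le z_2}\e^{-2\pi i\theta_p}p^{-s}$ close to $g-h$ minus a small correction.

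Next I will work in the Hardy-type space $H$ of functions analytic on $D=G_{\delta/2}$ that are square-integrable with respect to area measure; convergence in $H$ implies uniform convergence on $\overline G$ by Cauchy's estimates. The key step is the rearrangement theorem in $H$ (Pecherskii's theorem), applied to the vectors $u_p(s)=p^{-s}$. It requires two inputs. The first is $\sum_p\|u_p\|^2<\infty$, which holds because $\RePart s>1/2$ on $D$ after shrinking $\delta$. The second is that $\sum_p|\langle u_p,\phi\rangle|=\infty$ for every nonzero continuous linear functional $\phi$ on $H$. Granting these, the set of sums $\sum_p a_pu_p$ with $|a_p|=1$ and only finitely many $a_p\neq1$, or more conveniently conditionally convergent rearranged sums, is dense in $H$.

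The main obstacle is that second condition. I will write $\phi(u_p)=\int_{\overline D}p^{-s}\,d\mu(s)$ for a complex measure $\mu$ with compact support in $\{\RePart s>1/2\}$, and set $\Phi(w)=\int \e^{-sw}\,d\mu(s)$. This is an entire function of exponential type. If $\sum_p|\Phi(\log p)|<\infty$, I will use a Bernstein-type theorem on the density of primes to conclude that $\Phi$ decays along the positive real axis faster than $\e^{-cw}$ for every $c$. Admissibility of $G$, meaning the complement of $G_\delta$ is connected, then enters through the Laplace transform: by Runge, polynomials $s^n$ are dense, and $\Phi^{(n)}(0)=\int(-s)^n\,d\mu(s)=0$ for all $n$. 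This forces $\phi=0$, a contradiction.

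Finally I will assemble the pieces. Density gives finitely many primes $z_1<p\le z_2$ and unimodular coefficients $\e^{-2\pi i\theta_p}$ making the main sum close to the target in $H$, hence uniformly on $\overline G$. Setting $\mathcal M=\{p\le z_2\}$ with these $\theta_p$, and $\theta_p=0$ for $p\le z_1$, gives the required set, and $\{p\le z\}\subset\mathcal M$ holds as soon as $z_1\ge z$. Exponentiating and using uniform continuity of $\exp$ on bounded sets yields $\max_{\overline G}|Z_{\mathcal M}(s,\boldsymbol\theta)-F(s)|<\rho$. Since all of this is classical, in the paper I would cite Voronin, or Steuding's \emph{Value-Distribution of $L$-functions}, Lemmas 5.x, rather than reprove the rearrangement theorem.
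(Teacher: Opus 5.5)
Your outline (logarithm and Runge, discarding the $j\ge2$ terms, a Pecherskii-type rearrangement theorem in a Bergman space, truncation, exponentiation) is the standard Voronin--Bagchi proof. Your closing decision to cite it is also what the paper does: it simply invokes Pa\'nkowski's Lemma~2 with the trivial character and no excluded primes. However, the crux of your sketch is misstated, and as written it fails. From $\sum_p|\Phi(\log p)|<\infty$, the Bernstein-type theorem combined with the prime number theorem gives only $\limsup_{w\to\infty}w^{-1}\log|\Phi(w)|\le-1$. The exponent $-1$ comes from comparing $|\Phi(\log p)|$ with $1/p$ and using $\sum_p1/p=\infty$; a threshold $p^{-c}$ with $c>1$ carries no information. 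Nothing stronger can hold for measures merely supported in $\{\RePart s>1/2\}$: the point mass at $s=2$ gives $\Phi(w)=e^{-2w}$ with $\sum_p|\Phi(\log p)|=\sum_pp^{-2}<\infty$, yet $\Phi$ does not decay like $e^{-3w}$. Tellingly, your argument never uses $\overline G\subset\{\RePart s<1\}$. The lemma is false without it, since $|Z_{\mathcal M}(s,\boldsymbol\theta)|\le\zeta(\RePart s)$ for $\RePart s>1$, so large constants $F$ cannot be approximated there.

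The repair uses exactly this hypothesis. Take $\delta$ so small that $\overline{G_{\delta/2}}\subset\{\sigma_1\le\RePart s\le\sigma_2\}$ with $1/2<\sigma_1\le\sigma_2<1$. Since $|\Phi(w)|\ll_\epsilon e^{-(1-\epsilon)w}$, the integral $\int_0^\infty\Phi(w)e^{zw}\,dw$ is analytic for $\RePart z<1$ and bounded on $\RePart z\le c$ for any $c<1$. It coincides with the Cauchy transform $\int d\mu(s)/(s-z)$ for $\RePart z<\sigma_1$. Going around the bounded set $\overline{G_{\delta/2}}$, it therefore also coincides with it on the strip $\sigma_2<\RePart z<1$, where the Cauchy transform is bounded on $\RePart z\ge c$ for $c>\sigma_2$. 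The two thus define a bounded entire function tending to $0$ at $+\infty$, which vanishes by Liouville. Expanding the Cauchy transform at $\infty$ then gives $\int s^n\,d\mu(s)=0$ for every $n$. Equivalently, cite the standard lemma that $\Phi\not\equiv0$ with $\operatorname{supp}\mu\subset\{\RePart s<1\}$ forces $\limsup_{w\to\infty}w^{-1}\log|\Phi(w)|>-1$.

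Two smaller points. First, series $\sum_pa_pu_p$ with only finitely many $a_p\ne1$ never converge in $H$, because $\sum_pp^{-s}$ diverges for $\RePart s<1$. You need convergent unimodular series followed by truncation, as in your last step. Second, Runge's theorem does not make polynomials dense in the whole Bergman space of $G_{\delta/2}$; this fails, for instance, if the parallel set is pinched along a segment. Work instead in the $L^2$-closure of the polynomials, which contains every $u_p$ and your target by Runge on $\overline{G_{\delta/2}}$.
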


\begin{proof}
This is \cite[Lemma~2]{Pankowski2018}, specialized to the Dirichlet
character modulo $1$ and to the empty excluded set of primes.  The
increasing finite prime sets in that lemma exhaust all primes; hence a
sufficiently late set contains every $p\leq z$ and gives uniform error
less than $\rho$ on $\overline G$.
\end{proof}

The next lemma is the one-function, nonresonant case of the direct
finite-product argument in \cite[Lemma~4]{Pankowski2018}.  We give the
calculation because the density constant must be independent of the
later cutoff $Y$.

\begin{lemma}[Finite products with a uniform positive density]
\label{lem:voronin-finite-products}
Let $d\geq1$ be fixed, let
$K\subset\{s\in\C:\sigma_d^*<\RePart s<1\}$ be a compact set with
connected complement,
and let $f$ satisfy the hypotheses of \cref{thm:universality}.  For every
$\varepsilon>0$ there exist $Y_0\geq2$ and $c>0$ such that, for every
fixed $Y\geq Y_0$,
\begin{equation}\label{eq:finite-products-positive-density}
 \liminf_{N\to\infty}\frac1N\#\left\{2\leq k\leq N:
 \max_{s\in K}|\zeta_Y(s+it_k^d)-f(s)|<3\varepsilon/4\right\}
 \geq c.
\end{equation}
The constant $c$ is independent of $Y$.  The same assertion holds with
$\sigma_d^*$ replaced by $1/2$.
\end{lemma}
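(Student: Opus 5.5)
Apply \cref{lem:voronin-twisted-product} to $f$ to get a fixed twisted product $Z_{\mathcal M}(s,\boldsymbol\theta)$ close to $f$ on $K$. The frequencies $\log p$ are linearly independent over $\mathbb Q$, so by Weyl's criterion and \cref{prop:gram-exponential-sum} the phases $(t_k^d\log p/(2\pi))_p$ are uniformly distributed on a finite torus. Thus $\zeta_Y(s+it_k^d)$ equals $Z$ at a uniformly distributed point of the torus. The work is in making this cover all large $Y$ with one density constant $c$.

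\emph{Step 1 (target).} First I would enlarge $K$ slightly. By Mergelyan's theorem, replace $f$ by a polynomial $p$ with no zeros on $K$ and $|f-p|<\varepsilon/8$. Then choose an admissible open $G\supset K$ with $\overline G$ inside the strip, on a neighborhood of which $F=p$ is analytic and nonvanishing. Apply \cref{lem:voronin-twisted-product} with $\rho=\varepsilon/8$ to get $\mathcal M\supset\{p\le z\}$ and $\boldsymbol\theta$ with $|Z_{\mathcal M}(\cdot,\boldsymbol\theta)-F|<\varepsilon/8$ on $\overline G$. Fix this $\mathcal M$ and put $Y_0=\max\mathcal M$.

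\emph{Step 2 (the remaining primes).} For $Y\ge Y_0$ write
\[
 \zeta_Y(s+it_k^d)=Z_{\mathcal M}\bigl(s,\boldsymbol\omega_k\bigr)\,Z_{Y\setminus\mathcal M}\bigl(s,\boldsymbol\omega_k\bigr),
 \qquad \omega_{k,p}=\{-t_k^d\log p/(2\pi)\}.
\]
For the second factor I would use a mean-square bound over the torus. Its logarithm is $\sum_{p\in(\mathcal M, Y]}e(\omega_p)p^{-s}+O(\sum_{p>z}p^{-2\sigma})$. By orthogonality, its $L^2$ norm over $[0,1)^{\{p\le Y\}\setminus\mathcal M}$ is at most $\sum_{p>z}p^{-2\sigma}$, uniformly in $Y$ on a slightly larger compact set. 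Cauchy's formula then turns this into a sup bound on $K$. This is where the condition $\Re s>1/2$ enters: in this $L^2$ sense the series converges for $\sigma>1/2$, with a bound independent of $Y$, and enlarging $z$ makes it small.

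Chebyshev's inequality then gives a set $B\subset[0,1)^{\{p\le Y\}}$ of measure at least $\tfrac12 \operatorname{vol}(\text{neighbourhood }V\text{ of }\boldsymbol\theta)$ on which $\max_K|Z_{\mathcal M}Z_{\text{rest}}-f|<3\varepsilon/4$. Here $V$ is the set of $\boldsymbol\omega_{\mathcal M}$ with $|Z_{\mathcal M}(\cdot,\boldsymbol\omega)-F|<\varepsilon/8$, and $B$ has product form $V\times W_Y$ with $\operatorname{vol}(W_Y)\ge 1/2$. So $c:=\tfrac12\operatorname{vol}V$ depends only on $\mathcal M$, not on $Y$. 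This uniformity in $Y$ is the main obstacle. The remedy is to bound the tail factor in $L^2$ rather than to approximate it pointwise.

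\emph{Step 3 (equidistribution).} For fixed $Y$, the vector $(\omega_{k,p})_{p\le Y}$ is uniformly distributed mod $1$. A nonzero integer vector $(h_p)$ gives the frequency $\lambda=\sum h_p\log p\ne0$ by unique factorization. By \cref{prop:gram-exponential-sum} summed over dyadic blocks, $\frac1N\sum_k e^{i\lambda t_k^d}\to0$, which is Weyl's criterion. The set $B$ can be taken open with boundary of measure zero, after shrinking $V$ and $W_Y$ to Jordan-measurable open sets by continuity. Then the density of the $k$ with $\boldsymbol\omega_k\in B$ equals $\operatorname{vol}B\ge c$. This gives \eqref{eq:finite-products-positive-density}.

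The same argument works verbatim with $\sigma_d^*$ replaced by $1/2$. \cref{prop:gram-exponential-sum} is unconditional and \cref{lem:voronin-twisted-product} needs only $\Re s>1/2$, so the region of $K$ only matters through the strip.
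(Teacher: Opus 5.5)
Your overall structure matches the paper's. You fix a core product $Z_{\mathcal M}$ from \cref{lem:voronin-twisted-product} and bound the primes outside $\mathcal M$ by an $L^2$ orthogonality estimate on the torus that is uniform in $Y$. You then pass to the sup norm on $K$ via Cauchy/Bergman, apply Chebyshev, and conclude by equidistribution through Weyl's criterion and \cref{prop:gram-exponential-sum}. Working with $\log Z_{\mathrm{rest}}$ and the product set $V\times W_Y$, instead of the paper's exact identity for $\int|Z_{\mathcal R_Y}-1|^2$, is a harmless variation.

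The genuine gap is in Step 1. You take $F=p$ to be a \emph{polynomial} approximating $f$. You then assert, without argument, that there is an admissible open $G\supset K$ with $\overline G$ in the strip and $p$ zero-free on a neighbourhood of $\overline G$. Nothing guarantees this.

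The lemma only assumes that $\C\setminus K$ is connected. So $K$ may be C-shaped with a zero of $p$ in its concavity. For example, take $f(s)=s-s_0$ with $s_0$ enclosed by the C; Mergelyan may simply return $p=f$. Then:
\begin{itemize}
\item Every convex neighbourhood of $K$, in particular every rectangle, contains the convex hull of $K$ and hence the zero $s_0$.
\item Thin neighbourhoods $K_\rho$ are not admissible. Since $(K_\rho)_\delta=K_{\rho+\delta}$, once $\rho+\delta$ reaches about half the gap width the C closes up while a bounded hole remains.
\end{itemize}
You would need an ad hoc construction, such as slits from every zero of $p$ to the outside, checked for every $\delta>0$, and the proposal gives none. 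Since \cref{lem:voronin-twisted-product} requires both admissibility of $G$ and $F\neq0$ near $\overline G$, Step 1 is not justified as written.

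The fix is the paper's choice of target. Because $f$ is zero-free on $K$ and $\C\setminus K$ is connected, $f$ has a continuous logarithm on $K$, analytic in the interior. Apply Mergelyan to that logarithm to get $F=\exp Q$ with $Q$ a polynomial. Then $F$ is entire and never vanishes. Any open rectangle $G$ with $K\subset G$ and $\overline G$ in the strip is admissible, since every $G_\delta$ is convex.

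With this change the rest of your plan goes through. One small adjustment is still needed:
\begin{itemize}
\item Fix $V$ from the start as a box around $\boldsymbol\theta$, as the paper does with $D$.
\item Arrange Chebyshev to give $\operatorname{vol}W_Y>1/2$ strictly.
\end{itemize}
Otherwise, replacing $V$ and $W_Y$ by Jordan-measurable inner approximations can push $\operatorname{vol}B$ below your $c=\tfrac12\operatorname{vol}V$.
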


\begin{proof}
Because $\C \setminus K$ is connected and $f$ has no zeros on $K$, the
function $f$ has a continuous logarithm on $K$ which is analytic in its
interior.  Mergelyan's theorem, applied to this logarithm, gives a
polynomial $Q$ such that, with $F=\exp Q$,
\begin{equation}\label{eq:voronin-mergelyan}
 \max_{s\in K}|F(s)-f(s)|<\frac{\varepsilon}{4}.
\end{equation}
Choose a bounded open rectangle $G$ with
\begin{equation}\label{eq:voronin-domain}
 K\subset G,
 \qquad
 \overline G\subset\{s\in\C:\sigma_d^*<\RePart s<1\}.
\end{equation}
Put
\[
 \sigma_G=\min_{s\in\overline G}\RePart s>\frac12,
 \qquad
 B=1+\max_{s\in\overline G}|F(s)|.
\]
Write $s=\sigma+i\tau$ and $dA(s)=d\sigma\,d\tau$ for ordinary planar
area measure.  Put
\[
 r=\frac12\operatorname{dist}(K,\C\setminus G)>0.
\]
For every function $h$ analytic on a neighborhood of $\overline G$, the
mean-value inequality for $|h|^2$ gives
\begin{equation}\label{eq:voronin-bergman}
 \max_{s\in K}|h(s)|^2
 \leq \frac1{\pi r^2}\iint_G|h(s)|^2\,dA(s)
 =:C_{K,G}\iint_G|h(s)|^2\,dA(s).
\end{equation}
Indeed, the closed disc of radius $r$ about every point of $K$ lies in
$G$, and the first inequality follows by integrating $|h|^2$ over that
disc.  This is also commonly called the Bergman--Cauchy estimate.

Since $\sigma_G>1/2$,
\begin{equation}\label{eq:voronin-tail-product}
 R(z):=\prod_{p>z}(1-p^{-2\sigma_G})^{-1}-1
 \longrightarrow0
 \qquad(z\to\infty).
\end{equation}
Choose $z$ so large that
\begin{equation}\label{eq:voronin-z-choice}
 C_{K,G}\operatorname{area}(G)B^2R(z)<\frac{\varepsilon^2}{128}.
\end{equation}
Apply \cref{lem:voronin-twisted-product} with this $z$ and with an error
smaller than $\min\{1/4,\varepsilon/16\}$.  It gives a finite prime set
$\mathcal M\supset\{p:p\leq z\}$ and phases
$\boldsymbol\theta$.  By an arbitrarily small perturbation of their
representatives, if necessary, we may suppose that
$\boldsymbol\theta\in(0,1)^{\mathcal M}$.  Continuity of
\eqref{eq:voronin-twisted-product} then gives an ordinary rectangular
box with closure contained in $(0,1)^{\mathcal M}$,
\[
 D=\prod_{p\in\mathcal M}(\alpha_p,\beta_p)
 \subset(0,1)^{\mathcal M},
 \qquad \boldsymbol\theta\in D,
\]
such that, for every $\boldsymbol\omega_{\mathcal M}\in D$,
\begin{equation}\label{eq:voronin-core-approximation}
 \max_{s\in\overline G}
 |Z_{\mathcal M}(s,\boldsymbol\omega_{\mathcal M})-F(s)|
 <\frac{\varepsilon}{8},
 \qquad
 \max_{s\in\overline G}
 |Z_{\mathcal M}(s,\boldsymbol\omega_{\mathcal M})|
 \leq B.
\end{equation}
Let
\[
 a=\operatorname{vol}_{\mathcal M}(D)
   =\prod_{p\in\mathcal M}(\beta_p-\alpha_p)>0,
 \qquad
 Y_0=\max\mathcal M.
\]

Fix $Y\geq Y_0$, put
\[
 \mathcal Q_Y=\{p:p\leq Y\},
 \qquad
 \mathcal R_Y=\mathcal Q_Y\setminus\mathcal M,
\]
Since $\mathcal M\subset\mathcal Q_Y$,
\begin{equation}\label{eq:voronin-core-tail-factorization}
 Z_{\mathcal Q_Y}(s,\boldsymbol\omega)
 -Z_{\mathcal M}(s,\boldsymbol\omega_{\mathcal M})
 =Z_{\mathcal M}(s,\boldsymbol\omega_{\mathcal M})
 \left(Z_{\mathcal R_Y}(s,\boldsymbol\omega_{\mathcal R_Y})-1\right).
\end{equation}

For $s=\sigma+i\tau$, elementary orthogonality on $[0,1]$ gives
\begin{align}
 &\int_0^1\!\cdots\!\int_0^1
 |Z_{\mathcal R_Y}(s,\boldsymbol\omega)-1|^2\,
 d\boldsymbol\omega_{\mathcal R_Y}
 \nonumber\\
 &\qquad
 =\sum_{\substack{(\nu_p)\in\mathbb N_0^{\mathcal R_Y}\\
                  (\nu_p)\ne\boldsymbol0}}
   \prod_{p\in\mathcal R_Y}p^{-2\nu_p\sigma}
 =\prod_{p\in\mathcal R_Y}(1-p^{-2\sigma})^{-1}-1
 \leq R(z).
 \label{eq:voronin-tail-orthogonality}
\end{align}
Indeed, expand the absolutely convergent Dirichlet series of the finite
Euler product and integrate term by term.  For every pair of
nonnegative integers $r,\ell$,
\[
 \int_0^1\exp(2\pi i(r-\ell)\omega)\,d\omega
 =
 \begin{cases}
 1,&r=\ell,\\
 0,&r\ne\ell.
 \end{cases}
\]
Thus all unequal prime-power monomials disappear.  Every prime in
$\mathcal R_Y$ exceeds $z$, which proves the final inequality.  The
usual empty-product convention covers $\mathcal R_Y=\varnothing$.

Combining \eqref{eq:voronin-bergman}--%
\eqref{eq:voronin-tail-orthogonality} and using ordinary iterated
integration gives
\begin{align}
 &\int_D\int_{[0,1]^{\mathcal R_Y}}
 \max_{s\in K}|Z_{\mathcal Q_Y}(s,\boldsymbol\omega)
 -Z_{\mathcal M}(s,\boldsymbol\omega_{\mathcal M})|^2
 \,d\boldsymbol\omega_{\mathcal R_Y}
 \,d\boldsymbol\omega_{\mathcal M}
 \nonumber\\
 &\qquad
 \leq aC_{K,G}\operatorname{area}(G)B^2R(z)
 <\frac{a\varepsilon^2}{128}.
 \label{eq:voronin-tail-mean-square}
\end{align}
Let $\Omega_Y$ be the subset of
$D\times[0,1)^{\mathcal R_Y}$ on which
\[
 \max_{s\in K}|Z_{\mathcal Q_Y}(s,\boldsymbol\omega)
 -Z_{\mathcal M}(s,\boldsymbol\omega_{\mathcal M})|
 <\frac{\varepsilon}{4}.
\]
The elementary integral form of Chebyshev's inequality and
\eqref{eq:voronin-tail-mean-square} give explicitly
\[
 \operatorname{vol}_{\mathcal Q_Y}
 \bigl((D\times[0,1)^{\mathcal R_Y})\setminus\Omega_Y\bigr)
 <\frac{16}{\varepsilon^2}\,
   \frac{a\varepsilon^2}{128}
 =\frac a8.
\]
Since the containing rectangle has volume $a$, it follows that
\begin{equation}\label{eq:voronin-omega-volume}
 \operatorname{vol}_{\mathcal Q_Y}(\Omega_Y)
 >a-\frac a8=\frac{7a}{8}>\frac a2.
\end{equation}
On $\Omega_Y$, equations \eqref{eq:voronin-mergelyan} and
\eqref{eq:voronin-core-approximation} give
\begin{equation}\label{eq:voronin-finite-product-close}
 \max_{s\in K}|Z_{\mathcal Q_Y}(s,\boldsymbol\omega)-f(s)|
 <\frac{5\varepsilon}{8}<\frac{3\varepsilon}{4}.
\end{equation}

It remains to turn the ordinary volume in
\eqref{eq:voronin-omega-volume} into density of Gram-power shifts.  For
every finite set of primes $\mathcal Q$, the vectors
\begin{equation}\label{eq:voronin-prime-vectors}
 \left(\frac{t_k^d\log p}{2\pi}\right)_{p\in\mathcal Q}
\end{equation}
are uniformly distributed modulo one in $[0,1)^{\mathcal Q}$.  By
Weyl's criterion, a nontrivial integer vector leads to
\[
 \frac1N\sum_{k=2}^{N}
 \exp\left(it_k^d\sum_{p\in\mathcal Q}h_p\log p\right),
 \qquad h_p\in\mathbb Z.
\]
Unique factorization shows that the frequency is nonzero, and
\cref{prop:gram-exponential-sum}, followed by a dyadic decomposition,
makes this average tend to zero.

Since $\Omega_Y$ is relatively open and has volume greater than $a/2$,
inner approximation supplies a finite union $\mathcal B_Y$ of boxes
with pairwise disjoint interiors and boundaries of volume zero such
that
\[
 \overline{\mathcal B_Y}\subset\Omega_Y,
 \qquad
 \operatorname{vol}_{\mathcal Q_Y}(\mathcal B_Y)>\frac a2.
\]
Uniform distribution now gives
\[
 \lim_{N\to\infty}\frac1N
 \#\left\{2\leq k\leq N:
 \left(\left\{\frac{t_k^d\log p}{2\pi}\right\}\right)_{p\leq Y}
 \in\mathcal B_Y\right\}
 =\operatorname{vol}_{\mathcal Q_Y}(\mathcal B_Y)>\frac a2.
\]
Finally,
\begin{equation}\label{eq:voronin-shift-identity}
 Z_{\mathcal Q_Y}\left(
 s,\left(\left\{\frac{t_k^d\log p}{2\pi}\right\}\right)_{p\leq Y}\right)
 =\zeta_Y(s+it_k^d).
\end{equation}
Equations \eqref{eq:voronin-finite-product-close} and
\eqref{eq:voronin-shift-identity} prove
\eqref{eq:finite-products-positive-density} with $c=a/2$.  The number
$a$, and hence $c$, was fixed before $Y$ and is independent of it.

For the last assertion, choose the rectangle $G$ with
$\overline G\subset\{1/2<\RePart s<1\}$.  Nothing else in the proof
changes.
\end{proof}

\begin{proof}[Proof of \cref{thm:universality}]
We first upgrade the mean-square approximation to the uniform norm on
$K$.  Choose a bounded open rectangle $G$ satisfying
\eqref{eq:voronin-domain}.  For every $k$, the function
\[
 h_{k,Y}(z)=\zeta(z+it_k^d)-\zeta_Y(z+it_k^d)
\]
is analytic on a neighborhood of $\overline G$, because $\overline G$
lies strictly to the left of the line $\RePart z=1$.  Apply
\eqref{eq:voronin-bergman}, average over $k$, and use Fubini's theorem.
With
\begin{equation}\label{eq:voronin-function-space-approximation}
 \Delta(Y):=\limsup_{N\to\infty}\frac1N\sum_{k=2}^{N}
 \max_{s\in K}|\zeta(s+it_k^d)-\zeta_Y(s+it_k^d)|^2,
\end{equation}
we obtain
\begin{align*}
 \Delta(Y)
 &\leq C_{K,G}\operatorname{area}(G)
 \limsup_{N\to\infty}\sup_{z\in\overline G}\frac1N\sum_{k=2}^{N}
 |\zeta(z+it_k^d)-\zeta_Y(z+it_k^d)|^2\\
 &\longrightarrow0
 \qquad(Y\to\infty)
\end{align*}
by \cref{thm:unconditional-approximation}, or by
\cref{thm:quadratic-improvement} when $d=2$, applied to $\overline G$.

Apply \cref{lem:voronin-finite-products} with the given
$\varepsilon$.  It supplies $Y_0$ and $c>0$, independent of the later
cutoff.  Only now choose one fixed $Y\geq Y_0$ so large that
\begin{equation}\label{eq:voronin-final-Y-choice}
 \Delta(Y)<\frac{c\varepsilon^2}{144}.
\end{equation}
By Chebyshev's inequality,
\begin{align}
 \limsup_{N\to\infty}\frac1N\#\biggl\{2\leq k\leq N:
 &\max_{s\in K}|\zeta(s+it_k^d)-\zeta_Y(s+it_k^d)|
 \geq\frac{\varepsilon}{4}\biggr\}
 <\frac c9.
 \label{eq:voronin-final-chebyshev}
\end{align}
On the other hand, \eqref{eq:finite-products-positive-density} gives a
set of lower density at least $c$ on which
$\max_{s\in K}|\zeta_Y(s+it_k^d)-f(s)|<3\varepsilon/4$.
After the exceptional set in \eqref{eq:voronin-final-chebyshev} is
removed, the remaining set has lower density at least $8c/9>0$, and the
triangle inequality gives
\[
 \max_{s\in K}|\zeta(s+it_k^d)-f(s)|<\varepsilon.
\]
This proves \eqref{eq:universality} by a direct density argument.

Under the Lindelöf hypothesis, take
$\overline G\subset\{1/2<\RePart s<1\}$ and use
\cref{thm:conditional-approximation} in the Bergman estimate following
\eqref{eq:voronin-function-space-approximation}, together with the last
assertion of \cref{lem:voronin-finite-products}.  The remaining
finite-product calculation and equidistribution argument are unchanged.
\end{proof}

\bibliographystyle{amsplain}
\bibliography{DiscreteGramMoment}

@book{GrahamKolesnik1991,
  author    = {Graham, S. W. and Kolesnik, G.},
  title     = {Van der {C}orput's Method of Exponential Sums},
  series    = {London Mathematical Society Lecture Note Series},
  volume    = {126},
  publisher = {Cambridge University Press},
  address   = {Cambridge},
  year      = {1991},
  doi       = {10.1017/CBO9780511661976}
}

@book{Huxley1996,
  author    = {Huxley, M. N.},
  title     = {Area, Lattice Points, and Exponential Sums},
  series    = {London Mathematical Society Monographs. New Series},
  volume    = {13},
  publisher = {Clarendon Press, Oxford University Press},
  address   = {Oxford},
  year      = {1996},
  doi       = {10.1093/oso/9780198534662.001.0001}
}

@misc{DundulisEtAl2026GramDerivatives,
  author       = {Dundulis, M. and Garunk{\v{s}}tis, R. and
                  Laurin{\v{c}}ikas, A. and {\v{S}}im{\.{e}}nas, R.},
  title        = {Higher derivatives of the {G}ram function},
  year         = {2026},
  howpublished = {\href{https://arxiv.org/abs/2610.17349}{arXiv:2610.17349} [math.NT]},
  doi          = {10.48550/arXiv.2610.17349}
}

@article{Korolev2016,
  author  = {Korolev, M. A.},
  title   = {Gram's law in the theory of the {R}iemann zeta-function. {P}art 1},
  journal = {Proc. Steklov Inst. Math.},
  volume  = {292},
  number  = {Suppl. 2},
  year    = {2016},
  pages   = {S1--S146},
  doi     = {10.1134/S0081543816030019}
}

@article{KorolevLaurincikas2019,
  author  = {Korolev, M. A. and Laurin{\v{c}}ikas, A.},
  title   = {A new application of the {G}ram points},
  journal = {Aequationes Math.},
  volume  = {93},
  number  = {5},
  year    = {2019},
  pages   = {859--873},
  doi     = {10.1007/s00010-019-00647-8}
}

@article{Laurincikas2023,
  author  = {Laurin{\v{c}}ikas, A.},
  title   = {Joint discrete approximation of analytic functions by shifts of the
             {R}iemann zeta-function twisted by {G}ram points},
  journal = {Mathematics},
  volume  = {11},
  number  = {3},
  year    = {2023},
  pages   = {565},
  doi     = {10.3390/math11030565}
}

@article{Lavrik1994,
  author  = {Lavrik, A. A.},
  title   = {Titchmarsh's problem in the discrete theory of the {R}iemann
             zeta-function},
  journal = {Proc. Steklov Inst. Math.},
  volume  = {207},
  year    = {1995},
  pages   = {179--209},
  note    = {Russian original published in 1994}
}

@misc{Sourmelidis2020,
  author       = {Sourmelidis, A.},
  title        = {Discrete moments of the {R}iemann zeta-function near
                  {$\sigma=1$}},
  year         = {2020},
  howpublished = {arXiv:2002.02772 [math.NT]},
  doi          = {10.48550/arXiv.2002.02772}
}

@book{Titchmarsh1986,
  author    = {Titchmarsh, E. C.},
  title     = {The Theory of the {R}iemann Zeta-Function},
  edition   = {2nd},
  note      = {Revised by D. R. Heath-Brown},
  publisher = {Oxford University Press},
  address   = {Oxford},
  year      = {1986}
}

@article{Pankowski2018,
  author  = {Pa{\'n}kowski, {\L}.},
  title   = {Joint universality for dependent {$L$}-functions},
  journal = {Ramanujan J.},
  volume  = {45},
  number  = {1},
  pages   = {181--195},
  year    = {2018},
  doi     = {10.1007/s11139-017-9886-5},
  eprint  = {1604.04396},
  archivePrefix = {arXiv}
}

@article{Gonek2012,
  author  = {Gonek, S. M.},
  title   = {Finite {E}uler products and the {R}iemann hypothesis},
  journal = {Trans. Amer. Math. Soc.},
  volume  = {364},
  number  = {4},
  year    = {2012},
  pages   = {2157--2191},
  doi     = {10.1090/S0002-9947-2011-05546-7}
}

@article{SourmelidisSteudingSuriajaya2022Acta,
  author  = {Sourmelidis, A. and Steuding, J. and Suriajaya, A. I.},
  title   = {{R}iemann-type functional equations: {D}irichlet polynomial
             approximations and a weak {G}ram law},
  journal = {Acta Arith.},
  volume  = {204},
  number  = {2},
  year    = {2022},
  pages   = {97--113},
  doi     = {10.4064/aa210111-13-4}
}

@article{SourmelidisSteudingSuriajaya2022Indag,
  author  = {Sourmelidis, A. and Steuding, J. and Suriajaya, A. I.},
  title   = {{R}iemann-type functional equations: {J}ulia line and
             counting formulae},
  journal = {Indag. Math. (N.S.)},
  volume  = {33},
  number  = {6},
  year    = {2022},
  pages   = {1236--1262},
  doi     = {10.1016/j.indag.2022.08.002}
}

\end{document}